\let\cl@chapter\undefined
\newenvironment{prf}
{\begin{trivlist} \item[] {\em Proof }}
{$\hfill\qed$ \end{trivlist}}
\newenvironment{prfc}[1][]
{\begin{trivlist} \item[] {\em Proof #1 }}
{$\hfill\qed$ \end{trivlist}}
\newcounter{claim} 
\renewenvironment{claim}[1][]
{\refstepcounter{claim} \begin{trivlist} \item[] {\bf Claim~\theclaim}\space#1 \itshape}
{\end{trivlist}}
\newenvironment{cpf}
{\begin{trivlist} \item[] {\em Proof of claim }}
{$\hfill\diamond$ \end{trivlist}}
\newenvironment{spf}
{\begin{trivlist} \item[] {\em Proof of step }}
{$\hfill\triangleleft$ \end{trivlist}}
\journalname{Mathematical Programming Computation}
\newtheorem{assumption}{Assumption}
\newtheorem{theorem}{Theorem}
\newtheorem{proposition}{Proposition}
\newtheorem{lemma}{Lemma}
\newtheorem{definition}{Definition}
\newtheorem{assumption}{Assumption}
\Crefname{chapter}{Chap.}{Chaps.}
\Crefname{section}{Sect.}{Sects.}
\Crefname{proposition}{Prop.}{Props.}
\Crefname{theorem}{Thm.}{Thms.}
\Crefname{definition}{Defn.}{Defns.}
\Crefname{corollary}{Cor.}{Cors.}
\Crefname{assumption}{Assum.}{Assums.}
\def\ie{{i.e.,} }
\newcommand{\orgdiv}[1]{#1}%
\newcommand{\orgname}[1]{#1}%
\newcommand{\orgaddress}[1]{#1}%
\newcommand{\postcode}[1]{#1}%
\newcommand{\city}[1]{#1}%
\newcommand{\country}[1]{#1}%
\newcommand{\fnm}[1]{#1}%
\newcommand{\sur}[1]{#1}%
\newcolumntype{L}{>{$}l<{$}}
\newcolumntype{C}{>{$}c<{$}}
\newcolumntype{R}{>{$}r<{$}}
\newcommand{\bR}{\mathbb{R}}
\newcommand{\cD}{{\mathcal{D}}}
\newcommand{\cE}{{\mathcal E}}
\newcommand{\cP}{{\mathcal P}}
\newcommand{\cF}{{\mathcal F}}
\newcommand{\cV}{{\mathcal V}}
\newcommand{\cEp}{\cE_{\mathrm{p}}}
\newcommand{\cFp}{\cF_{\mathrm{p}}}
\newcommand{\cVp}{\cV_{\mathrm{p}}}
\newcommand{\cEc}{\cE_{\mathrm{c}}}
\newcommand{\cFc}{\cF_{\mathrm{c}}}
\newcommand{\cEI}{{\Pi}}
\newcommand{\cEIp}{\cEI_{\mathrm{p}}}
\newcommand{\cVc}{\cV_{\mathrm{c}}}
\newcommand{\dash}{:}
\newcommand{\tabledefline}[2]{\multicolumn{1}{l}{\rlap{#1\ \dash\ #2}}\\}
\DeclareMathOperator{\conv}{conv}
\DeclareMathOperator{\argmax}{argmax}
\DeclareMathOperator{\ineq}{Ineq}
\newcommand{\cplex}{\texttt{CPLEX}\xspace}
\newcommand{\ccity}{\texttt{City}\xspace}
\newcommand{\kgroup}{\texttt{Kgroup}\xspace}
\newcommand{\random}{\texttt{Random}\xspace}
\newcommand{\esmall}{\texttt{Small}\xspace}
\newcommand{\elarge}{\texttt{Large}\xspace}
\newcommand{\bi}{\begin{list}{$\bullet$}{\setlength{\parsep}{0pt}\setlength{\itemsep}{0pt}}}
\title{Formulations of the continuous set-covering problem on networks: a comparative study}
\titlerunning{Formulations of the continuous set-covering problem on networks: a comparative study}
\authorrunning{Xu and D'Ambrosio}
\author{\fnm{Liding} \sur{Xu} \and \fnm{Claudia} \sur{D'Ambrosio}}
\institute{
  \fnm{Liding} \sur{Xu},  \fnm{Claudia} \sur{D'Ambrosio} \at
  \orgdiv{LIX CNRS}, \orgname{\'{E}cole Polytechnique, Institut Polytechnique de Paris}, \orgaddress{\city{Palaiseau}, \postcode{91128}, \country{France}} \\
 \email{lidingxu.ac@gmail.com, dambrosio@lix.polytechnique.fr}
}
\author{\fnm{Liding} \sur{Xu} \thanks{ \orgdiv{LIX CNRS}, \orgname{\'Ecole Polytechnique, Institut Polytechnique de Paris}, \orgaddress{\city{Palaiseau}, \postcode{91128}, \country{France}}.
             E-mail: {\tt lidingxu.ac@gmail.com, dambrosio@lix.polytechnique.fr}}
             \and
\fnm{Claudia} \sur{D'Ambrosio} \footnotemark[1]
}
\date{\today}
\begin{document}

\maketitle

\date{}

\begin{abstract}
We study continuous set covering on networks and propose several new MILP formulations and valid inequalities. In contrast to state-of-the-art formulations, the new formulations only use edges to index installed points, and the formulation sizes are smaller. The covering conditions can be represented as  multivariate piecewise linear concave constraints, which we formulate as  disjunctive systems. We propose three MILP formulations based on indicator constraint, big-M, and disjunctive programming techniques for modeling the disjunctive system. Finally, we give a classification of new and old formulations, and conduct experiments to compare them computationally.

\ifthenelse {\boolean{springer}}
{
} {}
\end{abstract}

\ifthenelse {\boolean{springer}}
{
\keywords{integer programming, network design, continuous set covering, disjunctive programming, formulations, nonconvex piecewise linear functions}
}{
\emph{Key words:  integer programming, network design, continuous set covering, disjunctive programming, formulations, nonconvex piecewise linear functions}
}

\section{Introduction}

 We consider an undirected connected network $N:=(V,E,\ell)$, where $\ell:E \to \bR_+$ is the edge length function. We denote $\ell_e$ the length $\ell(e)$ of edge $e$ for short. The continuum of edges and vertices of $N$ is denoted with $C(N)$. The distance function $d(\cdot,\cdot)$ defines the distance between any two points in $C(N)$, which coincides with the length of the shortest path in $C(N)$ connecting them.

 Throughout this paper,  let  $\delta$ be a given positive real number denoting the covering radius.  A point $p\in C(N)$ is said to $\delta$-cover (cover for short) $p'\in C(N)$ if $d(p,p')\le \delta$ holds. Note that, symmetrically, if $p$ covers $p'$, then $p'$ covers $p$.  Moreover, a $\delta$-path from $p$ is a path with $p$ as its end of length $\delta$, so $\delta$-paths from $p$ defines  all  coverable  points by it in the network.

 The continuous set-covering problem on $N$ is to find a set of  points in $C(N)$ of minimum cardinality that $\delta$-covers the whole network, and is formally stated next.
 
\begin{definition}\label{def.problem}
The Continuous Set-Covering (CSC) Problem on a network $N$ can be formulated as the following optimization problem:
\begin{equation}
\label{cfl}
	\min\Big\{|\cP|: \: \cP \subseteq C(N)  \textup{ and } \forall p\in C(N), \: \exists p'\in \cP, \, d(p,p')\leq \delta\Big\}.
\end{equation}
A finite set $\cP$ satisfying the condition within \eqref{cfl} is called a cover of $N$, and the cover $\cP^*$ achieving the minimum of \eqref{cfl} is called a minimum (optimal) cover.
\end{definition}
\Cref{fig.covergraph} displays a network and the configuration of its covers by three points $A,B,C$. The parts of edges covered by different points are depicted using different styles. We note that the point $C$ is in the interior of an edge.

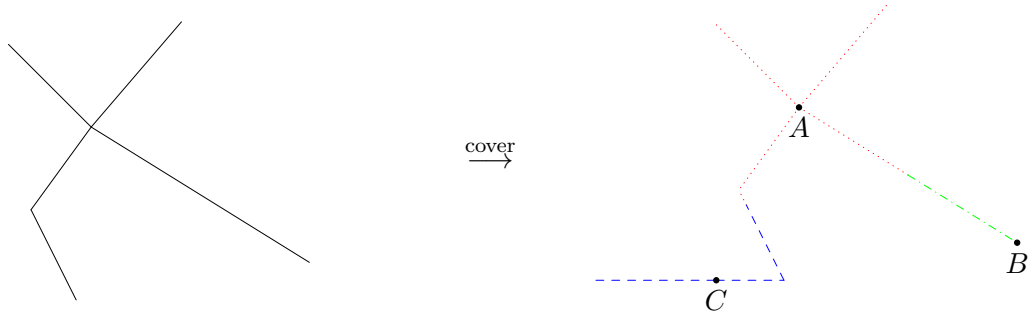
\begin{figure}
\begin{minipage}[h]{0.47\linewidth}
\begin{center}
\begin{tikzpicture}
\tkzDefPoints{0/0/A, 1.2/1.4/B, 2.9/-1.8/C, -1.1/1.1/D, -0.8/-1.1/E,-0.2/-2.3/F,-2.7/-2.3/G}
\tkzDrawSegment(A,B)
\tkzDrawSegment(A,C)
\tkzDrawSegment(A, D)
\tkzDrawSegment(A,E)
\tkzDrawSegment(F,E)
\end{tikzpicture}
\end{center} 
\end{minipage}
\hfill
$ \overset{\text{cover}} {\longrightarrow}$
\vspace{0.2 cm}
\begin{minipage}[h]{0.47\linewidth}
\begin{center}
\begin{tikzpicture}
\tkzDefPoints{0/0/A, 1.2/1.4/B, 2.9/-1.8/C, 1.45/-0.9/CM, -1.1/1.1/D, -0.8/-1.1/E, -0.7/-1.3/EM, -0.2/-2.3/F,-2.7/-2.3/G, -1.1/-2.3/GM}
\tkzDrawSegment[red,dotted](A,B)
\tkzDrawSegment[red,dotted](A,CM)
\tkzDrawSegment[green,dashdotted](CM, C)
\tkzDrawSegment[red,dotted](A,D)
\tkzDrawSegment[red,dotted](A,E)
\tkzDrawSegment[red,dotted](EM,E)
\tkzDrawSegment[blue,dashed](F,EM)
\tkzDrawSegment[blue,dashed](G,GM)
\tkzDrawSegment[blue,dashed](GM,F)
\tkzDrawPoints(A,C,GM)
\tkzLabelPoint[below](GM){$C$}
\tkzLabelPoint[below](C){$B$}
\tkzLabelPoint[below](A){$A$}
\end{tikzpicture}
\end{center}
\end{minipage}

\caption{A network (left) and a configuration of its continuous set covers (right).}
\label{fig.csc}
\end{figure}

The set $\mathcal{P}$ in Definition \ref{def.problem} can represent the locations of ambulance bases  \cite{health}, surveillance cameras \cite{Gusev20}, routing servers in a network of computers \cite{worm}, cranes for construction \cite{crane}, aerial military medical evacuation points \cite{military}, aircraft alert sites for homeland defense \cite{homeland}, or eVTOL safety landing sites in an urban area \cite{liding}. These applications can be modeled as a CSC problem or its variant. The CSC problem is $\mathcal{NP}$-hard  in general  \cite{Hartmann21}.

The only methods that guarantee to find an optimal solution to the CSC problem are based on integer programming.
To the best of our knowledge, the first mixed-integer linear programming (MILP) formulation without any preprocessing technique is proposed  in \cite{Hamacher20}, and it uses edges to index candidates points that must include an optimal cover. In \cite{pelegrin2023continuous}, the authors propose new  formulations using edges and vertices to index candidate points, and study new covering conditions allowing reduction of the sizes of the formulations. Their computational results show that  preprocessing techniques (delimitation and bound tightening) can improve those formulations and lead to state-of-the-art performance.

In this paper, we turn back to the basic edge model in \cite{Hamacher20}. Because it does not need additional variables to model the vertex candidates, it may be useful in practice due to  its simplicity. We show that the preprocessing techniques in \cite{pelegrin2023continuous} can also apply to the basic edge model and result in formulations of smaller sizes. In addition, we study several formulations of the preprocessed edge model and strengthening methods: i) we propose a MILP formulation with indicator constraints; ii) we use big-M and disjunctive programming techniques to construct two MILP reformulations without indicator constraints, iii) we propose valid inequalities to strengthen the MILP reformulations. We also give a classification of the new and existing  formulations. Finally, we conduct computational tests to compare these formulations and give a receipe for selecting formulations.

\subsection{Related work}

There are vast variants and applications of facility location and set cover problems. We review some works related to the CSC problem.  The maximal covering location problem \cite{Church74} is a classical extension of the set cover problem on networks, and  the solution space of its variants could be continuous \cite{chapter-plastria,bansal2017planar}, discrete \cite{cordeau2019benders,chapter-marin}, or on networks \cite{berman2016covering,bucarey2022benders}. In \cite{berman2016covering}, the demand is made over  edges. In \cite{baldomero2022upgrading}, the authors studied the upgrading version of the maximal covering location problem with edge length modifications on networks. Another related problem  is the obnoxious facility location problem \cite{drezner2018weber}, which  aims at locating undesirable facilities that hurt communities. The most common  objective is to maximize the shortest distance to the closest facility, and the problem has various variants featuring multiple facilities on the plane \cite{drezner2019planar,kalczynski2021obnoxious}, $p$-median objective \cite{kalczynski2022obnoxious}, or edge demand on networks \cite{berman2016covering}. We refer to \cite{church2022review} for a recent review on the obnoxious facility location problem. The total edge covering problem \cite{fallah2009covering} aims to determine a set of vertices of a given graph that cover all edges with the extra condition that each edge should be covered only by one vertex. In the following work \cite{sadigh2010mixed}, a variant of this problem allows  partial cover of an edge.  A unified characteristic of facility location problems in a continuous space is studied in \cite{Schobel2019}. We refer to \cite{puerto2018extensive} for a summary of the research progress in facility location problems on networks.

Apart from the integer programming approaches,
discretization methods were  used to tackle the CSC problem. These methods identify subsets of candidate locations that guarantee to contain an optimal solution. 
There are at least three such methods for  variants of the CSC problem \cite{Church79,Hamacher20,Gurevich84},  relying on different assumptions on the network and the covering radius. However, it is difficult to extend discretization methods to solve the CSC for general networks and real radii \cite{pelegrin2023continuous}, because the number of identified candidate locations in these methods may be exponential in the size of the network.

The challenge in this paper stems from modeling multivariate piecewise linear concave constraints. There are some related studies in mixed-integer programming. In \cite{li2009superior}, the authors proposed a big-M approach to model univariate piecewise linear functions. In a more recent work \cite{vielma2018embedding},  new formulations for univariate piecewise linear functions are proposed and compared against existing ones. In \cite{d2010piecewise}, the authors studied formulations of piecewise linear functions for approximating nonlinear bivariate functions. In \cite{rovatti2014optimistic}, the authors studied optimistic MILP modeling of non-linear optimization problems using piecewise linear functions. In \cite{huchette2019combinatorial}, the authors studied bivariate piecewise linear functions through  the lens of disjunctive programming \cite{balas1979disjunctive}. In \cite{huchette2023nonconvex}, the authors propose  mixed-integer programming  formulations for optimization over nonconvex piecewise linear functions.

\subsection{Notation}
We assume that $V$ is totally ordered by the binary relation $\preceq$. Every edge $e \in E$ has a unique representation, $e = (v_a, v_b)$, where $v_a, v_b \in V$, and $v_a \preceq v_b$. We denote $v_a = e(a), v_b = e(b)$. From now on, we take $e=(v_a, v_b)$ indifferently as a continuum in $C(N)$ or as an edge ending at  $v_a, v_b$. For two points $p,p' \in e$, we take $(p,p')$ as the sub continuum with extreme points $p,p'$ in $e$. For an positive integer $k$, let $[k]$ denote the set $\{1,\cdots,k\}$.
\section{Preliminary}

We review several cover conditions and delimitations introduced in \cite{pelegrin2023continuous}.  Then, we define models for the continuous set covering problem.

As in \cite{Hamacher20,pelegrin2023continuous}, we assume that the following assumption holds in the sequel:
\begin{assumption} \label{assumption}
For all $e \in E$, $\delta \ge \ell_e$.
\end{assumption}
This assumption limits the set of tractable networks. In fact, we can make this assumption w.l.o.g. because, by splitting the edges, any network can be converted to a network meeting this assumption without loss of the generality.

In \Cref{def.problem}, we know that the original cover condition is:
\begin{equation}
\label{eq.coverorg}
	\forall p\in C(N), \: \exists p'\in \cP, \, d(p,p')\leq \delta
\end{equation}
 Under \Cref{assumption}, we can alternatively write the following equivalent condition.

 \begin{lemma}\cite{pelegrin2023continuous}
 \label{prop.covercond}
A finite set $\cP$ of points in $C(N)$  is a cover of $N$ if and only if,
for all $e:=(v_a,v_b) \in E$, its  edge cover condition is satisfied:\\ either there exists $p \in \cP\cap e$ or
 \begin{equation}
	\label{eq.coveredge}
    	\sum_{i \in \{a,b\}}   \max_{p \in \cP}  \left\{ \left(\delta - d( v_{i}, p) \right)_+  \right\} \ge \ell_e,
	\end{equation}
where $(x)_+:= \max \{x, 0\}$.
\end{lemma}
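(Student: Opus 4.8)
The plan is to first reduce the global cover condition to a per-edge condition and then analyze a single edge. Since $C(N)$ is the union of its edges, each taken as a continuum containing its two endpoints, the set $\cP$ satisfies \eqref{eq.coverorg} if and only if every edge $e=(v_a,v_b)\in E$ is entirely covered, meaning that every point of $e$ lies within distance $\delta$ of some $p\in\cP$. It therefore suffices to fix one edge $e$ and prove that $e$ is entirely covered if and only if its edge cover condition holds.

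I would dispose of the easy disjunct first. If there is some $p\in\cP\cap e$, then for any $q\in e$ the sub-continuum of $e$ between $p$ and $q$ is a path of length at most $\ell_e$, so $d(q,p)\le\ell_e\le\delta$ by \Cref{assumption}; hence the single point $p$ already covers all of $e$. Thus, when $\cP\cap e\neq\emptyset$, both the coverage of $e$ and the edge cover condition hold, settling this case in both directions. In the remainder I therefore assume $\cP\cap e=\emptyset$ and must show that $e$ is covered if and only if \eqref{eq.coveredge} holds.

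The geometric heart of the argument is that, because $\cP$ contains no point in the interior of $e$, any shortest path from a point $p\in\cP$ to an interior point $q$ of $e$ must enter $e$ through one of its two endpoints; together with the trivial endpoint cases this yields, for $q\in e$ parametrized by its distance $t\in[0,\ell_e]$ from $v_a$ (so its distance from $v_b$ is $\ell_e-t$), the identity $d(q,p)=\min\{\,t+d(v_a,p),\,(\ell_e-t)+d(v_b,p)\,\}$. Consequently $q$ is covered by $\cP$ if and only if $t\le\alpha_a$ or $\ell_e-t\le\alpha_b$, where $\alpha_i:=\max_{p\in\cP}\bigl(\delta-d(v_i,p)\bigr)$ for $i\in\{a,b\}$; equivalently, the uncovered parameters form the set $\{t\in[0,\ell_e]:\alpha_a<t<\ell_e-\alpha_b\}$, and $e$ is covered exactly when this set is empty.

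The final step is to convert the emptiness of this set into \eqref{eq.coveredge}, that is, into $(\alpha_a)_+ + (\alpha_b)_+ \ge \ell_e$. Unwinding the quantifier over $t\in[0,\ell_e]$, the edge is covered if and only if $\alpha_a+\alpha_b\ge\ell_e$, or $\alpha_a\ge\ell_e$, or $\alpha_b\ge\ell_e$, and a short case check on the signs of $\alpha_a$ and $\alpha_b$ collapses this disjunction precisely to $(\alpha_a)_+ + (\alpha_b)_+\ge\ell_e$, which is the summand form in \eqref{eq.coveredge}. I expect this truncation bookkeeping to be the only delicate point: the $(\cdot)_+$ operators are exactly what is needed to handle the boundary situations in which one endpoint is covered only through the far endpoint, so that some $\alpha_i$ is negative while the other exceeds $\ell_e$, and one must verify that the open/closed endpoints of the covered intervals align so that the borderline equality $(\alpha_a)_+ + (\alpha_b)_+=\ell_e$ still produces full coverage.
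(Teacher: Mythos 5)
Your proof is correct, but there is nothing in the paper to compare it against: the paper states \Cref{prop.covercond} as an imported result, cited from \cite{pelegrin2023continuous} without any proof. Your write-up is the natural self-contained argument, and its steps all check out: the reduction to a single edge; the observation that \Cref{assumption} makes the disjunct $\cP\cap e\neq\emptyset$ settle both directions at once; the distance identity $d(q,p)=\min\{t+d(v_a,p),\,(\ell_e-t)+d(v_b,p)\}$, which is exactly where the standing assumption $\cP\cap e=\emptyset$ is needed (otherwise a $p$ interior to $e$ breaks it); and the emptiness analysis of $\{t\in[0,\ell_e]:\alpha_a<t<\ell_e-\alpha_b\}$, which correctly yields the three-way disjunction $\alpha_a+\alpha_b\ge\ell_e$ or $\alpha_a\ge\ell_e$ or $\alpha_b\ge\ell_e$ and collapses, via the sign case check and the identity $\max_{p}(x_p)_+=(\max_p x_p)_+$, to $(\alpha_a)_++(\alpha_b)_+\ge\ell_e$, i.e.\ \eqref{eq.coveredge}. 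Your worry about the borderline equality is also resolved correctly: since $\cP$ is finite the maxima are attained, the covered parameter sets $[0,\alpha_a]$ and $[\ell_e-\alpha_b,\ell_e]$ are closed, and the uncovered set is an open interval, so the weak inequality in \eqref{eq.coveredge} is the right one. The only unaddressed triviality is the degenerate case $\cP=\varnothing$ (where the maxima in \eqref{eq.coveredge} range over an empty set), which neither the paper nor the cited statement treats and which does not affect validity.
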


This proposition reduces the cover condition over the whole network $N$ to the edge cover conditions over the edges. The argument $\cP$ in the $\max$ function is over all potential points. This may result in a large search space, which can be a set of infinite cardinality without preprocessing.

For sparse networks and small covering radii,  a point in an edge can only cover its neighbors locally (with distance at most $\delta$). Therefore,  several delimitation sets can reduce the search space, \ie the $\argmax$ can shrink from $\cP$ to its subset. We introduce the delimitation used in \cite{pelegrin2023continuous}.

\begin{definition}\label{defa.linksetsv}
For each $v \in V$, the potential covers of $v$ are the candidate point locations to cover $v$:
\begin{align}
 \cE(v)&:= \{e'=(v'_a,v'_b) \in E:\:   d(v,v'_{i}) \le \delta \textup{ for some } i\in \{a,b\} \} \nonumber\\
 \cV(v)&:= \{v' \in V: \:  d(v,v') \le \delta\} \nonumber\\
 \cF(v)&:= \cE(v) \cup \cV(v). \nonumber
\end{align}
\end{definition}

\begin{definition}
\label{def.linksetse}
For each $e=(v_{a}, v_{b}) \in E$, the  complete covers of $e$ are the candidate point locations that can completely cover $e$:
\begin{align}
\cEc(e)& :=\{e' \in E:   \forall p' \in e', \forall  p \in e,\:   d(p,p') \le \delta\}\nonumber\\
   \cVc(e)&:= \{v' \in V:   \forall  p \in e,\: d(p,v') \le \delta \}\nonumber\\
  \cFc(e) &:= \cEc(e) \cup \cVc(e). \nonumber
\end{align}
\end{definition}

If  $\cFc(e)$ is not empty, $e$ is called completely coverable.

\begin{definition} \label{defa.linksetsvincident}
We define the following sets, for each $v\in V$:
\begin{align}
\cEp(v)&:=\{e'\in \cE(v):\: \exists e\in E(v),\: e'\notin\cEc(e)\} \nonumber\\  \cVp(v)&:=\{v'\in \cV(v):\: \exists e\in E(v),\: v'\notin\cVc(e)\}   \nonumber\\
\cFp(v)&:=\cEp(v) \cup \cVp(v), \nonumber
\end{align}
where $E(v):=\{e\in E:\: v\in e\}$ is the set of incident edges to $v$. We call these sets the \emph{partial covers} of $E(v)$.
\end{definition}

For example, we look at delimitation sets of the grid graph  in \Cref{fig.covergraph}, where the edge length is the visual distance between its end nodes in the figure. We note that $\ell_{13} = \ell_{34} < \ell_{14} = \sqrt{2} \ell_{13}  < \ell_{15} = \sqrt{5} \ell_{13}$. Let $\delta = \ell_{14}$. Then, $\cE(1):=\{(1,3), (1,4), (1,5), (3,4)\}, \cV(1):=\{1,3,4\}$; $\cEc((1,3)):=\{(1,3)\}, \cVc((1,3)) = \{1,3\}$; $\cEp(1) := \cE(1), \cVp(1) := \cV(1)$. Change $\delta$ to $\ell_{15}$. Then, $\cE(1):=\{(1,3), (1,4), (1,5), (3,4), (2,4)\}, \cV(1):=\{1,3,4,5\}$; $\cEc((1,3)):=\{(1,3),(3,4)\}, \cVc((1,3)) = \{1,3,4\}$; $\cEp(1) := \cE(1), \cVp(1) := \cV(1)$. Moreover, if we delete $(1,5),(2,4)$, then $\cEp(1) := \varnothing$.

\begin{figure}[!h]
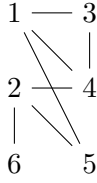

\centering
\usetikzlibrary {graphs.standard}
\tikz
 \graph {
  subgraph I_nm [V={1, 2,6}, W={3,4,5}];
  1 -- { 3, 4, 5 };
  2 -- { 4, 5 };
  3 -- 4;
  6--{2};
  }
;
\caption{A grid graph}
\label{fig.covergraph}
\end{figure}

Using the above delimitation sets, we give a delimited cover condition.

\begin{proposition}
\label{prop.coverdelimitcond}
A finite set $\cP$ of points in $C(N)$  is a cover of $N$ if and only if,
for all $e:=(v_a,v_b) \in E$, the delimited edge cover condition is satisfied:\\ either $\cP \cap \cFc(e) \ne \varnothing$ or
\begin{equation}
\label{eq.coverdlimitedge}
	\sum_{i \in \{a,b\}}  \max_{p \in   \cP \cap \cFp(v_i)} \left\{ \left(\delta - d( v_{i}, p) \right)_+  \right\} \ge \ell_e.
\end{equation}
\end{proposition}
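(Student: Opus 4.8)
The plan is to prove that, for every fixed finite $\cP\subseteq C(N)$ and every edge $e=(v_a,v_b)$, the delimited edge cover condition of \eqref{eq.coverdlimitedge} is \emph{equivalent} to the undelimited edge cover condition of \Cref{prop.covercond}. Since \Cref{prop.covercond} and the statement to be proved both quantify their respective edge conditions over all $e\in E$, this edge-by-edge equivalence, combined with \Cref{prop.covercond}, yields the proposition. Writing the condition of \Cref{prop.covercond} as ``$\cP\cap e\neq\varnothing$ \emph{or} \eqref{eq.coveredge}'' and the new one as ``$\cP\cap\cFc(e)\neq\varnothing$ \emph{or} \eqref{eq.coverdlimitedge}'', the argument rests on two facts: (i) a single point that completely covers $e$ already satisfies \eqref{eq.coveredge}; and (ii) when no point of $\cP$ completely covers $e$, each inner maximum over $p\in\cP$ in \eqref{eq.coveredge} may be restricted to $p\in\cP\cap\cFp(v_i)$ without changing its value.

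For the direction ``delimited $\Rightarrow$ undelimited'', suppose first $\cP\cap\cFc(e)\neq\varnothing$, witnessed by a point $p$ on some $e'\in\cEc(e)$ or equal to some $v'\in\cVc(e)$; by definition such $p$ satisfies $d(p,q)\le\delta$ for all $q\in e$. Setting $\alpha:=d(v_a,p)$ and $\beta:=d(v_b,p)$, complete coverage gives $\alpha,\beta\le\delta$, so the left-hand side of \eqref{eq.coveredge} is at least $(\delta-\alpha)+(\delta-\beta)=2\delta-\alpha-\beta$. A short computation of the point of $e$ farthest from $p$ (a geodesic to an interior point $q$ enters $e$ through $v_a$ or $v_b$, so $d(p,q)=\min\{\alpha+t,\beta+\ell_e-t\}$ with $t=d(v_a,q)$, peaking at $\tfrac12(\alpha+\beta+\ell_e)$) shows complete coverage forces $2\delta-\alpha-\beta\ge\ell_e$, whence \eqref{eq.coveredge} holds; the degenerate cases where the farthest point is an endpoint, or $p\in e$, are handled by the same distance bookkeeping together with \Cref{assumption} ($\ell_e\le\delta$). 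If instead \eqref{eq.coverdlimitedge} holds, then since $\cP\cap\cFp(v_i)\subseteq\cP$ the restricted maxima bound the full maxima from below, so \eqref{eq.coveredge} holds a fortiori.

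For the converse ``undelimited $\Rightarrow$ delimited'', if $\cP\cap e\neq\varnothing$ then, because \Cref{assumption} gives $\ell_e\le\delta$ and hence $e\in\cEc(e)$, the witnessing point already lies in $\cP\cap\cFc(e)$ and we are done. Otherwise \eqref{eq.coveredge} holds, and assuming $\cP\cap\cFc(e)=\varnothing$ I must derive \eqref{eq.coverdlimitedge}. First, any $p$ with $d(v_i,p)<\delta$ has its containing edge or vertex in $\cF(v_i)$, since a geodesic from $v_i$ to $p$ enters that edge through an endpoint $w$ with $d(v_i,w)\le d(v_i,p)<\delta$; as points outside $\cF(v_i)$ then contribute $0$, we get $\max_{p\in\cP}(\delta-d(v_i,p))_+=\max_{p\in\cP\cap\cF(v_i)}(\delta-d(v_i,p))_+$. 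Next let $p^*_i\in\cP\cap\cF(v_i)$ attain this maximum, with containing element $g_i\in\cF(v_i)$. If $g_i\notin\cFp(v_i)$, then by \Cref{defa.linksetsvincident} $g_i$ completely covers every incident edge of $v_i$, in particular $e$, so $g_i\in\cFc(e)$ and $p^*_i\in\cP\cap\cFc(e)$, contradicting our assumption; hence $g_i\in\cFp(v_i)$, which forces $\max_{p\in\cP\cap\cF(v_i)}=\max_{p\in\cP\cap\cFp(v_i)}$. Substituting these identities into \eqref{eq.coveredge} gives \eqref{eq.coverdlimitedge}.

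The main obstacle is this last argmax-restriction step: one must show the maximizer cannot lie in $\cF(v_i)\setminus\cFp(v_i)$, and the only leverage for this is the standing hypothesis that no single point of $\cP$ completely covers $e$. Care is also needed with the conventions when a maximum is taken over the empty set or attains the value $0$ (no point of $\cP$ covers $v_i$ within radius $\delta$); there all maxima collapse to $0$ and the identities hold trivially because every summand $(\delta-d(v_i,p))_+$ is nonnegative. The geometric sub-claims --- that geodesics enter an edge through an endpoint, and the explicit farthest-point value $\tfrac12(\alpha+\beta+\ell_e)$ --- are routine but are what justify facts (i) and the first reduction in (ii).
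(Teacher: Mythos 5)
Your proof is correct. Note that the paper itself states \Cref{prop.coverdelimitcond} without giving a proof (the delimitation machinery and the analogous result are adapted from \cite{pelegrin2023continuous}; the surrounding text only remarks that the delimited condition refines \Cref{prop.covercond}), so there is no in-paper argument to compare against --- but your route, an edge-by-edge equivalence between the delimited and undelimited conditions combined with \Cref{prop.covercond}, is exactly the intended one. You correctly isolate the two nontrivial points: first, that a complete-cover witness forces \eqref{eq.coveredge} (via the farthest-point value $\tfrac{1}{2}(\alpha+\beta+\ell_e)\le\delta$, and, for the converse case $\cP\cap e\neq\varnothing$, the observation that \Cref{assumption} gives $e\in\cEc(e)$, so an on-edge point is itself a $\cFc(e)$-witness); second, that under the hypothesis $\cP\cap\cFc(e)=\varnothing$ a maximizer's containing element cannot lie in $\cF(v_i)\setminus\cFp(v_i)$, since by \Cref{defa.linksetsvincident} such an element would completely cover every edge of $E(v_i)$, in particular $e$, a contradiction --- this is indeed the crux, and your two-stage restriction $\cP\to\cP\cap\cF(v_i)\to\cP\cap\cFp(v_i)$ (positive contribution forces the containing element into $\cF(v_i)$, then the $\cFc$-emptiness forces it into $\cFp(v_i)$) is sound. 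The empty-max-equals-zero convention you flag is genuinely needed for \eqref{eq.coverdlimitedge} to be well defined and is consistent with the paper's implicit usage; the only cosmetic looseness is that a maximizer sitting at a vertex has several containing elements, but your argument applies verbatim to any single witnessing element of $\cF(v_i)$, so nothing breaks.
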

For all $v_i \in V, e \in E$, setting $\cFc(e) = e, \cFp(v_i) = C(N), \cEp(v_i)=C(N)$ (trivial delimitation), the delimited edge cover condition  becomes the edge cover condition \eqref{eq.coveredge}. Thereby, the delimited edge cover condition of \Cref{prop.coverdelimitcond} refines the edge  cover condition of \Cref{prop.covercond}. In the sequel, we  discuss results based on \Cref{eq.coverdlimitedge}.  

We call $\cP$ the set of \textit{candidate points}.
 A \textit{location space} is a subset of $C(N)$ that could  contain a candidate point of $\cP$.
 We next present an alternative definition of  the CSC problem, which changes \Cref{def.problem}  in the following two aspects: i) the location spaces for $\cP$ are restricted to a finite collection $\{C_f \subseteq C(N)\}_{f\in \cF}$, where $\cF$ is a finite index set to be determined later; ii) the covering condition \eqref{eq.coverorg} is replaced by \eqref{eq.coveredge} or \eqref{eq.coverdlimitedge}. Then, the CSC problem can be restated as follows:
 \begin{multline}
\label{cfl.re}
	\min\Big\{|\cP|: \: \cP \subseteq \{p_f:p_f \in C_f\}_{f \in \cF},\, \forall e \in E, \textup{ the edge cover condition \eqref{eq.coveredge} or \eqref{eq.coverdlimitedge} is satisfied} \Big\}.
\end{multline}
A \textit{model} for \eqref{cfl.re} is a mathematical program that computes an optimal cover.

Based on a simple observation, an optimal cover to the CSC problem has at most one candidate  point in each edge. Therefore, there could be two types of candidate points and  associated location spaces. Given an index $f \in \cF$, either  $f = v \in V$, there is one candidate point fixed at $v$, \ie $C_v = \{v\}$; or $f = e \in E$, there is a candidate point in $e$, \ie $C_e = e$. We have the following observation on the index set $\cF$: either $\cF= E$ or $\cF = E \cup V$. This will be later used to classify mathematical formulations for the CSC problem.

For a model with $\cF = E \cup V$, we call it an \textit{edge-vertex model}. Edge-vertex models are extensively studied in \cite{pelegrin2023continuous}. When $\cF = E$, we call it an \textit{edge model}. Edge models only use edges to index candidate points, so one can expect them to have smaller representation sizes. Instead of using full delimitation sets $\cF, \cF_c, \cF_p$, it suffices to restrict  to their subsets $\cE, \cE_c, \cE_p$, which only concern candidate points in edges. Except for the simple model in \cite{Hamacher20}, there are few elaborated edge models and related studies. We propose new edge models in the following.

\section{Delimited edge model}

In this section, we study mathematical programming formulations for the edge model under the delimited edge cover condition (namely, delimited edge model). We first present a mathematical programming formulation with indicator constraints. We then reformulate it into two MILP formulations using big-M and disjunctive programming techniques. Finally, we introduce the valid clique inequalities for these formulations.  

 In the delimited edge model, $\cF = E$, we use  $e'$ to denote edges where points are installed,  and use $e$ to denote   edges to be covered, respectively. We refer to $v_a, v_b$ as the end vertices of $e=(v_a,v_b)\in E$; given $e'=(v'_a,v'_b) \in E$, we refer to $v'_a, v'_b$ as its end vertices.
 
 There are two decisions associated with each  index $e' \in E$. One is to decide whether a candidate point is installed on $e'$. The second is only necessary for those candidate points installed in edges, and consists in determining their locations within the corresponding edges.

To represent the first of the above decisions, we define the following binary variables, which we call  \emph{placement variables}:
\[y_{e'} = \begin{cases}
1 & \textup{ if a point  is installed at } e'\\
0 &\textup{ otherwise}
\end{cases}
\qquad \forall e' \in E.
\]
 To represent the second of the above decisions,  we define the following continuous variables, which we name  \emph{coordinate variables}: $q_{e'}  \in  [0, \ell_{e'}]$. If $y_{e'} = 1$, denote  by $p$ the point installed in $e'$, then $q_{e'}$ should denote the the length of path  connecting $v'_a$ and $p$ in $e'$. Thus, $q_{e'}$ defines the coordinate of $p$ in the network.

With information on complete covers and placement variables, we define the following binary \emph{completely covered variables} to indicate whether an edge $e \in E$ is completely covered:
\[w_e=
\begin{cases}
1 &\textup{ if $\exists e' \in \cEc(e)$ such that } y_{e'} = 1 \\
0 &\textup{ otherwise}.
\end{cases}
\]

For any uncompletely covered edge $e$ (with $w_{e} = 0$), $e$ should be covered by $\delta$-paths from one or two installed points. There are two possible cases. In the first  case, all points in $e$ are covered by a single $\delta$-path passing through $v_a$ or $v_b$ originated from an installed point;  in the second case,  points close to $v_a$ in $e$ are covered by a $\delta$-path passing through $v_a$ originated from an installed point, and points close to $v_b$ in $e$ are covered by a $\delta$-path passing through $v_b$ originated from an installed point. Therefore, we should look at the lengths of the truncated $\delta$-paths within $e$, and these truncated lengths depend on the coordinates of their end points. Then, we define the nonnegative continuous \emph{residual cover variable} $r_v$, for $v \in V$, which satisfies the following constraint:
\begin{equation}
\label{eq.rv}
  r_v
 \le \max_{p \in   \cP \cap \cEp(v)} (\delta - d(v,p)).   
\end{equation}
Note that, for  edges $e$ incident to $v$,   $\max_{p \in   \cP \cap \cEp(v)} (\delta - d(v,p))$ is the maximum  length of truncated $\delta$-paths within $e$ that pass through $v$.

Given uncompletely covered edges, we should compute residual covers of their incident nodes.
We define the following binary variables to model relations between those edges and nodes:
\[x_v=
\begin{cases}
1 &\textup{ if $\forall e \in E(v)$, } w_e = 1 \\
0 &\textup{ otherwise}.
\end{cases}
\]
If $x_v = 0$, then we should compute $r_v$; otherwise, we set $r_v = 0$, because all its incident edges are completely covered, and there is no need to consider its contribution to  covering anymore.

We note that the $\max$ function in \eqref{eq.rv} is convex and piecewise linear over its arguments, so the inequality in \eqref{eq.rv} is a concave multivariate piecewise linear  constraint. Then, we model the constraint using defined variables, and we show that such a constraint is MILP representable after introducing auxiliary binary variables. 

We introduce the following delimitation for all $v \in V$:
\begin{equation}
\label{eq.eipdef}
	\cEIp(v) := \{(e'=(v'_a,v'_b),i') \in \cEp(v) \times \{a,b\}:\: d(v,v'_{i'})\leq \delta\},
\end{equation}
which represents the search space of the argument of the max function in \eqref{eq.rv}. More precisely, we only consider the tuples of edges $e'$ and theirs end nodes $v'_{i'}$, such that we could install a point $p$ in the edge $e'$ in the partial cover $\cEp(v)$, and such a point $p$ could reach $v$ and $v'_{i'}$ through a $\delta$-path. 

Given the delimitation, we can   represent the max function using defined variables. For all $v(e',i') \in   \cEIp(v)$, we define the following affine function with indicators:
\begin{equation}
\label{eq.taudef}
\tau_{ve'i'}: x \mapsto \tau_{ve'i'}(x) := d(v,v_i')+ \mathbf{1}_{i'=a}x +\mathbf{1}_{i'=b}(\ell_{e'} - x).    
\end{equation}
Then, we can express the $\max$ function as the maximum of affine functions \eqref{eq.taudef} applied  on  $q_{e'}$, and the constraint \eqref{eq.rv}  has the following representation:
\[
 r_{v} \le \begin{cases}
 0 & \textup{ if } x_v = 1 \\
 \max_{(e',i') \in \cEIp(v): y_{e'} =1} (\delta - \tau_{ve'i'}(q_{e'})) & \textup{otherwise}.
 \end{cases}
\]

The CSC problem has the following mathematical programming formulation with indicator constraints:
\begin{subequations}
	\label{milpe}
	\begin{align}
   	\min & \sum_{e' \in E} y_{e'} \label{milpe.obj} \\
  	& w_{e} \ge y_{e'}   &  e \in E, e' \in \cEc(e) \label{milpe.completecover1}\\
      	& w_{e} \le  \sum_{e' \in \cEc(e)}y_{e'} 	&  e \in E \label{milpe.completecover2}\\
      	& x_{v}\geq 1-\sum_{e\in E(v)} (1-w_e) &  v\in V \label{milpe.enforcex1}\\
      	& x_{v}\leq w_e &  v\in V, e\in E(v) \label{milpe.enforcex2}\\
  	& \textup{if $x_v = 1$, then $r_v$ = 0; else, $ r_{v} \le \max_{(e',i') \in \cEIp(v): y_{e'} =1} (\delta - \tau_{ve'i'}(q_{e'}))$ } & v \in V \label{milpe.indicator} \\
	&  \ell_e (1 - w_e) \le  r_{v_a} + r_{v_b}	& e \in E \label{milpe.covere}\\
   &   w_e  \in \{0,1\} &	e \in E \label{milpe.varyw}\\
   & x_{v}\in \{0,1\}  & v \in V \label{milpe.varindicator}\\
  	& y_{e'} \in \{0,1\} &  e  \in E \label{milpe.varindicator2}\\
  	&  q_{e'} \in [0, \ell_{e}] &  e  \in E \label{milpe.varcontinuous1}\\
  	& r_{v} \geq 0 & v \in V. \label{milpe.varcontinuous2}
	\end{align}
	\end{subequations}

Given the installation of points indicated by $y_{e'}$, the constraints \eqref{milpe.completecover1} and \eqref{milpe.completecover2} model whether an edge $e$ is completely covered regardless of the concrete locations of these points. Given the complete cover situations indicated by $w_e$, the constraints \eqref{milpe.enforcex1} and  \eqref{milpe.enforcex2} decide whether the residual covers $r_{v_a}$ and  $r_{v_b}$ should be computed, \ie $x_v := \prod_{e \in  E(v)} w_e$ controls the computation of $r_v$. The complex indicator constraints \eqref{milpe.indicator} model the following computation: if $x_v=1$, then $r_v$ is set to zero value; otherwise, $ r_{v} \le  \max_{(e',i') \in \cEIp(v): y_{e'} =1} (\delta - \tau_{ve'i'}(q_{e'}))$ arising in \eqref{eq.coverdlimitedge} is computed. The constraint  \eqref{milpe.covere} stipulates the delimited edge covering condition \eqref{eq.coverdlimitedge} on the edge $e$, if $e$ is not completely covered.

Besides the binary constraints, the mathematical program \eqref{milpe} is highly nonconvex: the indicator constraints \eqref{milpe.indicator} are nonconvex;  their inner conditional constraints $r_v \le \max_{(e',i') \in \cEIp(v):y_{e'} =1} (\delta - \tau_{ve'i'}(q_{e'}))$ are concave constraints. We show that the nonconvexity can be handled by introducing additional binary variables, and the mathematical program \eqref{milpe} is MILP representable.

Before the derivation of the MILP reformulations, we transform the mathematical program \eqref{milpe} into a disjunctive program, whose disjunctions are  concatenated by logical or $\lor$. Then, we obtain its disjunctive reformulation.

\begin{proposition}
The constraint \eqref{milpe.indicator} is equivalent to the following disjunction:
%
\begin{equation}
\label{disjunction}
	\lor_{(e',i') \in \cEIp(v)}
	\left[  \begin{array}{c} x_v = 0 \\
        	y_{e'} =1 \\
        	r_{v} \le   \delta - \tau_{ve'i'}(q_{e'})
        	\end{array}
	\right]
	\lor
	\left[  \begin{array}{c} x_v = 1 \\
        	r_v = 0
        	\end{array}
	\right]
\end{equation}
\end{proposition}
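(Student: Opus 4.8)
The plan is to establish the equivalence pointwise: I would fix an arbitrary assignment of the variables $x_v, r_v, y_{e'}, q_{e'}$ compatible with their declared domains (in particular $x_v \in \{0,1\}$ and $r_v \ge 0$) and show that it satisfies the indicator constraint \eqref{milpe.indicator} if and only if it satisfies the disjunction \eqref{disjunction}. Since $x_v$ is binary, the natural skeleton is a case split on $x_v = 1$ versus $x_v = 0$, and this split lines up with the two kinds of brackets in \eqref{disjunction}: the single bracket carrying $x_v = 1$ and the family of brackets carrying $x_v = 0$.

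First I would handle $x_v = 1$. On this branch the indicator constraint collapses to $r_v = 0$. On the disjunction side, every bracket in the family $\lor_{(e',i')}$ imposes $x_v = 0$ and is therefore infeasible, so \eqref{disjunction} holds iff its last bracket $[x_v = 1,\ r_v = 0]$ holds, i.e.\ iff $r_v = 0$. The two conditions coincide, settling this branch.

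Next I would treat $x_v = 0$. Here the indicator constraint reduces to the single inequality $r_v \le \max_{(e',i') \in \cEIp(v):\, y_{e'}=1}(\delta - \tau_{ve'i'}(q_{e'}))$, while on the disjunction side the last bracket (which forces $x_v=1$) drops out and \eqref{disjunction} becomes $\lor_{(e',i') \in \cEIp(v)}[\,y_{e'}=1,\ r_v \le \delta - \tau_{ve'i'}(q_{e'})\,]$. The crux is the elementary fact that, for a finite index set, $r_v \le \max_{j \in S} a_j$ holds iff there exists $j \in S$ with $r_v \le a_j$; applying this with $S = \{(e',i') \in \cEIp(v): y_{e'}=1\}$ and $a_{e'i'} = \delta - \tau_{ve'i'}(q_{e'})$ turns the max-inequality into precisely the surviving disjunction. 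I would fold the condition $y_{e'}=1$ into the index of the existential quantifier, so that ``$\exists (e',i') \in \cEIp(v)$ with $y_{e'}=1$ and $r_v \le a_{e'i'}$'' is literally the disjunction over the family.

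The step that needs care, and which I expect to be the only real obstacle, is the empty-set corner case: when $x_v = 0$ but no installed point qualifies, so that the set $S$ above is empty. With the standard convention $\max \emptyset = -\infty$, the indicator inequality becomes $r_v \le -\infty$, which is infeasible because $r_v \ge 0$; correspondingly the disjunction is an empty disjunction over the family and, with the last bracket excluded by $x_v = 0$, is also false. Spelling out that both sides are simultaneously infeasible in this degenerate case is what makes the equivalence airtight, so I would state the convention $\max\emptyset = -\infty$ explicitly and let the reduction used in the $x_v = 0$ branch cover it uniformly rather than as a separate special case.
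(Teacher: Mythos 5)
Your proof is correct. The paper in fact states this proposition without any proof, treating the equivalence as immediate, so your argument supplies exactly the formalization that is omitted: the case split on the binary $x_v$ matches the two kinds of brackets, and the only substantive step is the elementary equivalence between $r_v \le \max_{j \in S} a_j$ and $\exists j \in S,\ r_v \le a_j$ for a finite index set, with $y_{e'}=1$ folded into the index set $S$. Your identification of the empty-$S$ corner case as the one point needing a convention is also right: under $\max\emptyset = -\infty$ both sides are simultaneously infeasible, which is the reading consistent with the disjunction \eqref{disjunction} and with the surrounding reformulation \eqref{eq.disjuncsystem}, where $x_v + \sum_{(e',i') \in \cEIp(v)} z_{ve'i'} = 1$ together with $z_{ve'i'} \le y_{e'}$ forces a qualifying installed point whenever $x_v = 0$, so the degenerate assignment is excluded from any feasible solution in any case.
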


We next derive three MILP reformulations of the disjunctive mathematical program based on indicator, big-M, and disjunctive programming techniques.

\subsection{Indicator formulation}
Many advanced MILP solvers support indicator constraints. This type of constraint allows us to model various implications between two constraints or variables. We will consider indicator constraints consisting of a binary variable and a linear constraint, where the linear constraint should be satisfied when the binary variable takes the value 1. The constraint may or may not hold when the binary variable takes the value 0.

Some solvers, like \cplex, can use special branching strategies to handle  indicator constraints. For many classes of problems, these branching strategies can significantly improve performance.  These solvers can also branch directly on the indicator constraint, or use MILP preprocessing techniques  to automatically transform the formulation into a big-M formulation and  derive  tighter values of big-M  constants that makes the formulation more stable.

We next propose a MILP formulation with indicator constraints. Recall that the disjunctive system \eqref{disjunction} is equivalent to the following system:

\begin{equation}
\label{eq.disjuncsystem}
\begin{aligned}
	& x_{v}  + \sum_{(e', i') \in \cEIp(v)} z_{ve'i'} = 1\\
	&  z_{ve'i'} \le y_{e'} &  (e',i') \in \cEIp(v) 	\\
	&	z_{ve'i'}  \Rightarrow r_{v} \le  \delta - \tau_{ve'i'}(q_{e'})  &  (e',i') \in \cEIp(v)  	\\
 	&	z_{ve'i'} \in \{0,1\} &   (e',i') \in \cEIp(v) \\
 	& x_v \in \{0,1\},
\end{aligned}
\end{equation}
where  $\Rightarrow$ denotes a logical implication. Using this result, we can obtain the  MILP reformulation with indicator constraints (indicator formulation):

\begin{subequations}
\label{milpsos}
	\begin{align}
   	\min & \sum_{e' \in E} y_{e'} \label{milpsos.obj} \\
  	& w_{e} \ge y_{e'}   &  e \in E, e' \in \cEc(e) \label{milpsos.completecover1}\\
      	& w_{e} \le  \sum_{e' \in \cEc(e)}y_{e'} 	&  e \in E \label{milpsos.completecover2}\\
      	& x_{v}\geq 1-\sum_{e\in E(v)} (1-w_e) &  v\in V \label{milpsos.enforcex1}\\
      	& x_{v}\leq w_e &  v\in V, e\in E(v) \label{milpsos.enforcex2}\\
  &  x_{v} + \sum_{(e', i') \in \cEIp(v)} z_{ve'i'} = 1  &   v\in V \label{milpsos.lcoversos} \\
	&   z_{ve'i'} \le y_{e'}  &  v \in V,  (e',i') \in \cEIp(v) \label{milpsos.indicatorei}\\  
 & z_{ve'i'} \Rightarrow  r_{v} \le   \delta - \tau_{ve'i'}(q_{e'}) & v \in V ,  (e',i') \in \cEIp(v) \label{milpsos.coverdist-edge} \\
	&  \ell_e (1 - w_e) \le  r_{v_a} + r_{v_b}	& e \in E \label{milpsos.covere}\\
   &   w_e  \in \{0,1\} &  	e \in E \label{milpsos.varyw}\\
   & x_{v} \in \{0,1\}  & v \in V \label{milpsos.varindicator1}\\
  	& z_{ve'i'} \in \{0,1\}  & v \in V, (e',i') \in \cEIp(v)\label{milpsos.varindicator2}\\
  	& y_{e'} \in \{0,1\} &  e'  \in E \label{milpsos.varindicator3}\\
      	& q_{e'} \in [0, \ell_{e'}] &  e'  \in E \label{milpsos.varcontinuous1}\\
  	& r_{v} \geq 0 & v \in V. \label{milpsos.varcontinuous2}
	\end{align}
\end{subequations}

As modeling of such a formulation is straightforward, one may use this formulation without need of any advanced technique. Thus, we write it here for further comparison with the other advanced formulations. We will examine whether the general-purpose solvers or  our tailored algorithms could handle  the indicator constraints \eqref{milpsos.coverdist-edge} more efficiently.

\subsection{Big-M formulation}

We use big-M techniques to model the disjunction \eqref{disjunction} or the indicator constraints \eqref{milpsos.coverdist-edge}.  For each conjunction term $x_v = 0, y_{e'} = 1,  r_{v} \le   \delta - \tau_{ve'i'}(q_{e'})$,  we introduce additional  binary indicator variable $z_{ve'i'}$ for modelling the disjunction:
\[z_{ve'i'}=
\begin{cases}
1 &\textup{ the conjunction term is activated }\\
0 &\textup{ otherwise}
\end{cases}.
\]

Then, the big-M formulation of the delimited edge model \eqref{milpe} for the CSC problem follows as:
\begin{subequations}
	\label{milpbm}
	\begin{align}
   	\min & \sum_{e' \in E} y_{e'} \label{milpbm.obj} \\
  	& w_{e} \ge y_{e'}   &  e \in E, e' \in \cEc(e) \label{milpbm.completecover1}\\
      	& w_{e} \le  \sum_{e' \in \cEc(e)}y_{e'} 	&  e \in E \label{milpbm.completecover2}\\
      	& x_{v}\geq 1-\sum_{e\in E(v)} (1-w_e) &  v\in V \label{milpbm.enforcex1}\\
      	& x_{v}\leq w_e &  v\in V, e\in E(v) \label{milpbm.enforcex2}\\
  &  x_{v} + \sum_{(e', i') \in \cEIp(v)} z_{ve'i'} = 1  &   v\in V \label{milpbm.lcoversos} \\
	&   z_{ve'i'} \le y_{e'}  &  v \in V,  (e',i') \in \cEIp(v) \label{milpbm.indicatorei}\\  
 &   r_{v} \le M_v (1-x_{v}) &  v\in V \label{milpbm.wbdl}\\
 &   r_{v} \le M_{ve'i'}(1 - z_{ve'i'})  +  \delta - \tau_{ve'i'}(q_{e'}) & v \in V ,  (e',i') \in \cEIp(v) \label{milpbm.coverdist-edge} \\
	&  \ell_e (1 - w_e) \le  r_{v_a} + r_{v_b}	& e \in E \label{milpbm.covere}\\
   &   w_e  \in \{0,1\} &  	e \in E \label{milpbm.varyw}\\
   & x_{v} \in \{0,1\}  & v \in V \label{milpbm.varindicator1}\\
  	& z_{ve'i'} \in \{0,1\}  & v \in V, (e',i') \in \cEIp(v)\label{milpbm.varindicator2}\\
  	& y_{e'} \in \{0,1\} &  e'  \in E \label{milpbm.varindicator3}\\
      	& q_{e'} \in [0, \ell_{e'}] &  e'  \in E \label{milpbm.varcontinuous1}\\
  	& r_{v} \geq 0 & v \in V. \label{milpbm.varcontinuous2}
	\end{align}
	\end{subequations}
 We remark that \eqref{milpbm} is a minor  reformulation of \eqref{milpsos}, where \eqref{milpsos.coverdist-edge} in \eqref{milpsos} is replaced as \eqref{milpbm.wbdl} and \eqref{milpbm.coverdist-edge} in \eqref{milpbm} .
    
One can trivially set all the big-M constants to  $M_v = \delta $ and $M_{vei} =  \delta +  \ell_e$, which are always valid for the formulation. We recall that there also exists big-M formulations for the edge-vertex model \cite{pelegrin2023continuous}. Interested readers may find that the new big-M formulation \eqref{milpbm} for the edge model  is indeed a subsystem of those big-M formulations. This is not a surprise, since the edge model is itself a simplification of the edge-vertex model. Due to this simplicity,  it is easier to derive  disjunctive programming formulation and valid inequalities in the following. We note that the simple model in \cite{Hamacher20} is also an edge model. That model  is exactly the  big-M formulation \eqref{milpbm} without any delimitation. Some advanced techniques in  \cite{pelegrin2023continuous}  can also tighten the big-M constants in the big-M formulation \eqref{milpbm}.  We will use these techniques in our experiments.

\subsection{Disjunctive programming formulation}

 Disjunctive programming is a systematic way to reformulate a disjunctive system into a MILP. This technique does not use any big-M constants but uses additional continuous variables. We will present a MILP reformulation of \eqref{milpe} using the disjunctive programming technique. A general disjunctive programming lemma is as follows.

\begin{lemma} \cite{balas1979disjunctive}
\label{lem.dp}
For $h \in [m]$, let $P_h:=\{x \in \bR^n: A^h x \le b^h, x \ge 0\}$ such that, for all $h \in [m]$, $\{x \in \bR^n:A^hx \le 0, x \ge 0\} = \{0\}$ and let $D = \cup_{h \in [m]}P_h$. Then, 
\begin{multline*}
    D = \\ \{x \in \bR^n:\exists x^1, \dots, x^m \in \bR^n_+\, ,  z \in \{0,1\}^m,\, x= \sum_{h \in [m]}x^h, \, 1= \sum_{h \in [m]}z^h, \, \forall h \in [m], A^h x^h \le b^hz_h\}
\end{multline*} and \begin{multline*}
\conv(D) = \\ \{x \in \bR^n:\exists x^1, \dots, x^m \in \bR^n_+,\,  z \in [0,1]^m,\, x= \sum_{h \in [m]}x^h, \, 1= \sum_{h \in [m]}z^h, \,  \forall h \in [m], A^h x^h \le b^hz_h\}.
\end{multline*}
\end{lemma}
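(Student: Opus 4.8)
The plan is to prove the two set identities separately, in each case by double inclusion, and to isolate the single place where the recession-cone hypothesis $\{x \in \bR^n : A^h x \le 0,\, x \ge 0\} = \{0\}$ does the real work: it forces every inactive block $x^h$ to vanish.

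First I would establish the characterization of $D = \cup_{h \in [m]} P_h$. For the inclusion of $D$ in the right-hand side, given $x \in P_k$ I simply set $z_k = 1$ and $z_h = 0$ for $h \ne k$, and $x^k = x$, $x^h = 0$ otherwise; one checks $\sum_h x^h = x$, $\sum_h z_h = 1$, and that $A^h x^h \le b^h z_h$ holds in both cases, since $A^k x^k = A^k x \le b^k$ as $x \in P_k$, while $A^h 0 = 0 \le 0$ for $h \ne k$. For the reverse inclusion, I would use that $z \in \{0,1\}^m$ with $\sum_h z_h = 1$ pins down a unique index $k$ with $z_k = 1$. For $h \ne k$ the constraints read $A^h x^h \le 0$, $x^h \ge 0$, so the hypothesis gives $x^h = 0$; hence $x = x^k$, and $A^k x^k \le b^k$ together with $x^k \ge 0$ places $x$ in $P_k \subseteq D$.

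Next I would treat the convex-hull identity; call the right-hand side $Q$, now with $z \in [0,1]^m$. The inclusion $\conv(D) \subseteq Q$ is the soft direction: $Q$ is the image of a polyhedron under the linear projection $(x, x^1, \dots, x^m, z) \mapsto x$, hence convex, and $D \subseteq Q$ follows from the first part together with $\{0,1\}^m \subseteq [0,1]^m$; taking convex hulls finishes it. For $Q \subseteq \conv(D)$ I would take $x \in Q$ with its witnesses $x^h, z$, let $S = \{h : z_h > 0\}$, and rescale: for $h \in S$ set $\hat{x}^h := x^h / z_h$, so that $A^h \hat{x}^h \le b^h$ and $\hat{x}^h \ge 0$, i.e. $\hat{x}^h \in P_h$. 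For $h \notin S$ the constraint again reduces to $A^h x^h \le 0$, $x^h \ge 0$, so the hypothesis forces $x^h = 0$. Then $x = \sum_{h \in S} z_h \hat{x}^h$ is a genuine convex combination of points of $D$, so $x \in \conv(D)$.

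The main obstacle --- really the only nontrivial point --- is the correct and repeated use of the boundedness hypothesis. Without $\{x : A^h x \le 0,\, x \ge 0\} = \{0\}$, the inactive or zero-weight blocks could carry nonzero recession directions, and the clean statement would fail; the general Balas form would then require extra homogenizing direction variables. I would therefore flag explicitly, both in the reverse step for $D$ and in the step $Q \subseteq \conv(D)$, that it is exactly this hypothesis that collapses the $z_h = 0$ blocks to the origin. The rescaling argument on the support $S$ and the observation that $Q$ is a projection of a polyhedron, hence convex, are the remaining routine ingredients.
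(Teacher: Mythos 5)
The paper offers no proof of this lemma at all---it is quoted verbatim from Balas's 1979 work with a citation---so there is no internal argument to compare against; your proposal supplies a complete, self-contained proof, and it is correct. Both double inclusions are sound: the forward direction for $D$ via the obvious one-hot assignment, the reverse via the unique active index $k$, and for the hull identity the observation that the right-hand side $Q$ is the linear projection of a polyhedron (hence convex and containing $D$) combined with the rescaling $\hat{x}^h := x^h/z_h$ on the support $S=\{h: z_h>0\}$. You also correctly identified the one load-bearing hypothesis: $\{x \in \bR^n : A^h x \le 0,\, x \ge 0\}=\{0\}$ says each $P_h$ has trivial recession cone, i.e.\ is a polytope, and this is precisely what collapses every inactive block ($z_h=0$ but $x^h$ unconstrained up to $A^h x^h \le 0$, $x^h \ge 0$) to the origin---in both the binary case and the $Q \subseteq \conv(D)$ step. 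This is also why the statement here can assert $\conv(D)$ exactly, whereas Balas's general theorem only gives the closed convex hull (or requires homogenizing ray variables): your explicit flagging of that point is the right emphasis. Two minor remarks, neither a gap: your argument silently handles empty pieces $P_h$ (any $h$ with $z_h>0$ produces a witness $\hat{x}^h \in P_h$, so empty pieces never enter the combination), and in the hull direction it is worth one clause noting that $\sum_{h\in S} z_h = 1$ with all $z_h>0$, so the rescaled points do form a genuine convex combination---which you in fact state. The proof would be accepted as written.
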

 We call  $A^hx^h \le b^h z_h$ the \textit{duplicated} constraint, $x^h$ the \textit{duplicated} variable of $h$-th conjunction, and $ x= \sum_{h \in [m]}x^h$ the \textit{aggregation} constraint.
 
We find that there are two sufficient conditions to apply the disjunctive programming lemma: for every conjunction: the \emph{nonnegativity condition} $x \ge 0$, and the \emph{zero condition} $\{x \in \bR^n:A^hx \le 0, x \ge 0\} = \{0\}$. These two conditions allow us to apply the disjunctive programming reformulation to the delimited edge model \eqref{milpe}.

We look at the following subsystem of the big-M formulation \eqref{milpbm} associated with a vertex $v \in V$:
\begin{equation}
\label{eq.bigmsystem}
\begin{aligned}
  &  x_{v} +\sum_{(e', i') \in \cEIp(v)} z_{ve'i'} = 1  &   \\
 &   r_{v} \le M_v (1-x_{v}) &  \\
  &	r_{v} \le M_{ve'i'}(1 - z_{ve'i'})  +  \delta - \tau_{ve'i'}(q_{e'}) &  (e',i') \in \cEIp(v) \\
   &  z_{ve'i'} \in \{0,1\}  &   (e',i') \in \cEIp(v) \\
  	& q_{e'} \in [0, \ell_{e'}] &  e' \in \cEp(v) \\
   & x_{v} \in \{0,1\}  &   \\
  	& r_{v} \geq 0. &
\end{aligned}
\end{equation}
We denote by $\cD_v^1$ the feasible set of the above system \eqref{eq.bigmsystem}, and by $\bar{\cD}_v^1$ the feasible set of  the continuous relaxation of \eqref{eq.bigmsystem}. In fact, we find that  $\cD_v^1$ models the following disjunctive system:
\begin{subequations}
\label{eq.disjuncsystem2}
\begin{align}
	& \lor_{(e',i') \in \cEIp(v)} \left[ r_{v} \le   \delta - \tau_{ve'i'}(q_{e'}) \right] \lor \left[ r_v = 0  \right] &  \label{eq.disjuncsystem2.disjunc}  \\
	&   q_{e'} \in [0, \ell_{e'}]  &  e' \in \cEp(v) 	\label{eq.disjuncsystem2.q}   \\
	& r_{v} \geq 0. &	\label{eq.disjuncsystem2.v}
\end{align}  
\end{subequations}

To apply the disjunctive programming lemma,  we intersect every conjunction in \eqref{eq.disjuncsystem2.disjunc} with  constraints \eqref{eq.disjuncsystem2.q} and \eqref{eq.disjuncsystem2.v}.  This results a pure disjunction of several conjunctions:


\begin{equation}
\label{eq.disjuncsystem20}
  \lor_{(e',i') \in \cEIp(v)} \left[  \begin{array}{c}
                                    	0 \le r_{v} \le \delta - \tau_{ve'i'}(q_{e'}) \\
                                    	0 \le q_{e'} \le \ell_{e'}
                                    	\end{array} \right]   
  \lor \left[ \begin{array}{c}
  0 \le r_v \le 0,\\
  \forall e' \in \cEp(v),  0 \le q_{e'} \le \ell_{e'}
   \end{array}\right].
\end{equation}

Now, each conjunction is an inequality system  that contains all relevant variable bounds.

For every $e' \in E$, we introduce the following partial inverse of the delimitation set $\cEIp(v)$ in \eqref{eq.eipdef}:
\begin{equation}
	\cEIp^{-1}(e') := \{(v, i')  :\: \exists i' \in \{a,b\}, (e',i') \in \cEIp(v)\}.
\end{equation}
 For every $(e',i') \in   \cEIp(v)$,  we define the following  affine function, similar to $\tau_{ve'i'}$  in \eqref{eq.taudef}:
\begin{equation}
R_{ve'i'}: (w,y) \mapsto R_{ve'i'}(w,y):= (\delta - d(v,v_i') - \mathbf{1}_{i'=b}\ell_{e'}) y + (\mathbf{1}_{i'=b}  - \mathbf{1}_{i'=a}) w.  
\end{equation}

Using the disjunctive programming \Cref{lem.dp}, we obtain a MILP reformulation of \eqref{eq.bigmsystem}:
\begin{equation}
\label{eq.dpsystem}
\begin{aligned}
  &  x_{v} + \sum_{(e', i') \in \cEIp(v)} z_{ve'i'} = 1  &   \\
&   r_v = \sum_{(e', i') \in \cEIp(v)} r_{ve'i'} &  \\
& q_{e'} =  q_{ve'} + \sum_{i': (e',i') \in \cEIp(v)} q_{ve'i'}   & e' \in \cEp(v)\\
 &   r_{ve'i'} \le R_{ve'i'}(q_{ve'i'}, z_{ve'i'}) &   (e',i') \in \cEIp(v) \\
 &  q_{ve'i'} \le z_{ve'i'} \ell_{e'} & (e',i') \in \cEIp(v) \\
  &  q_{ve'} \le \left(1- \sum_{i':(e', i') \in \cEIp(v)} z_{ve'i'} \right) \ell_{e'} & e' \in \cEp(v) \\
  	& z_{ve'i'} \in \{0,1\} &   (e',i') \in \cEIp(v) \\
  	& r_{ve'i'}, q_{ve'{i'}} \ge 0  &  (e',i') \in \cEIp(v) \\
  	&   q_{e'},  q_{ve'} \ge 0 &  e' \in \cEp(v) \\
   & x_{v} \in \{0,1\}  &   \\
  	& r_{v} \geq 0. &
\end{aligned}
\end{equation}
We note that the disjunctive programming reformulation reuses the binary variables $x_v, z_{ve'i'}$ of the big-M formulation, which indicate the activations of conjunctions.
We denote by $\cD_v^2$ the feasible set of the above MILP system \eqref{eq.dpsystem}, and by $\bar{\cD}_v^2$ the feasible set of  its continuous relaxation.

\begin{theorem}
\label{thm.dp}
Let $v \in V$, $\cD_v^1 = \cD_v^2 \subseteq \conv(\cD_v^2)= \bar{\cD}_v^2 \subseteq \bar{\cD}_v^1$.
\end{theorem}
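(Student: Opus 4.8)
The plan is to read all four relations in the common space of the shared variables $(r_v,(q_{e'})_{e'\in\cEp(v)},x_v,(z_{ve'i'})_{(e',i')\in\cEIp(v)})$, projecting out the duplicated continuous variables $r_{ve'i'},q_{ve'i'},q_{ve'}$ that \eqref{eq.dpsystem} introduces. Since projection commutes with taking convex hulls, this reduces the whole chain to properties of a single disjunctive set; write $\cD_v$ for the feasible set of the pure disjunction \eqref{eq.disjuncsystem20} (equivalently \eqref{eq.disjuncsystem2}), read with every conjunction carrying the full box bounds $0\le q_{e''}\le\ell_{e''}$ for all $e''\in\cEp(v)$.

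For $\cD_v^1=\cD_v^2$ I would first recall that $\cD_v^1$ models \eqref{eq.disjuncsystem2}: fixing the integral indicators, the equation $x_v+\sum z_{ve'i'}=1$ forces exactly one of the $m:=|\cEIp(v)|+1$ terms active, and substituting the valid constants $M_v=\delta$, $M_{ve'i'}=\delta+\ell_{e'}$ into \eqref{milpbm.wbdl}--\eqref{milpbm.coverdist-edge} makes every nonactive big-M inequality redundant while leaving exactly the active conjunction, so $\proj\cD_v^1=\cD_v$. For $\cD_v^2$ I would apply the integer statement of \Cref{lem.dp} to \eqref{eq.disjuncsystem20}: each conjunction has the form $\{A^hx\le b^h,\,x\ge 0\}$ with all variables nonnegative, and because every conjunction carries its box bounds it is bounded, so the zero condition $\{x:A^hx\le 0,\,x\ge 0\}=\{0\}$ holds. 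The one nonroutine point is that \eqref{eq.dpsystem} keeps no separate duplicated copy of each $q_{e''}$ per conjunction; it aggregates the contributions of all conjunctions in which $e'$ is not the active edge (the terms $(e'',i'')$ with $e''\ne e'$ together with the $r_v=0$ term) into the single slack $q_{ve'}$, with budget $q_{ve'}\le(1-\sum_{i'}z_{ve'i'})\ell_{e'}$. I would justify this by noting those conjunctions impose on $q_{e'}$ only the identical, uncoupled box constraint $0\le q_{e'}\le\ell_{e'}$, so replacing their individual duplicates by their sum with the summed bound loses no information; matching the duplicated inequality $r_{ve'i'}\le R_{ve'i'}(q_{ve'i'},z_{ve'i'})$ against $A^hx^h\le b^hz_h$ then shows \eqref{eq.dpsystem} is exactly the (aggregated) lemma representation, giving $\proj\cD_v^2=\cD_v=\proj\cD_v^1$.

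The inclusion $\cD_v^2\subseteq\conv(\cD_v^2)$ is immediate. For $\conv(\cD_v^2)=\bar\cD_v^2$ I would invoke the convex-hull statement of \Cref{lem.dp}: relaxing $x_v,z_{ve'i'}$ to $[0,1]$ in \eqref{eq.dpsystem} yields verbatim the lemma's description of $\conv(\cD_v)$, and since projection commutes with $\conv$ the same aggregation argument identifies $\bar\cD_v^2$ with $\conv(\cD_v^2)$.

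Finally, $\bar\cD_v^2\subseteq\bar\cD_v^1$ follows with no further computation: $\bar\cD_v^1$ is the continuous relaxation of the big-M system, hence a convex (polyhedral) set, and it contains every integer-feasible point, i.e.\ $\cD_v^1=\cD_v^2\subseteq\bar\cD_v^1$; being convex it must contain the smallest convex set through these points, namely $\conv(\cD_v^2)=\bar\cD_v^2$. The main obstacle is the middle step: checking the hypotheses of \Cref{lem.dp} (in particular that carrying all box bounds makes each conjunction bounded, so the zero condition holds) and verifying that the per-edge slack aggregation in \eqref{eq.dpsystem} is a faithful, convex-hull-preserving instantiation of the lemma rather than a strict relaxation.
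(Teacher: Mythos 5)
Your proposal is correct and follows essentially the same route as the paper's proof: both reduce the big-M system \eqref{eq.bigmsystem} and the system \eqref{eq.dpsystem} to the common disjunction \eqref{eq.disjuncsystem20}, verify the nonnegativity and zero conditions of \Cref{lem.dp} (your boundedness argument for the zero condition is equivalent to the paper's set-the-constants-to-zero check), justify the merging of the inactive duplicates of $q_{e'}$ into the single slack $q_{ve'}$ with bound $\bigl(1-\sum_{i'} z_{ve'i'}\bigr)\ell_{e'}$ as an LP-equivalent instantiation of the lemma, and close the chain $\bar{\cD}_v^2 \subseteq \bar{\cD}_v^1$ via convexity of the continuous relaxation. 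Your explicit bookkeeping --- projecting out the duplicated variables and invoking that linear projection commutes with $\conv$ --- is a welcome tightening of the paper's implicit identification of $\cD_v^1$ and $\cD_v^2$, which live in different variable spaces, but it is a refinement of the same argument rather than a different one.
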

\begin{proof}
We show that \eqref{eq.dpsystem} is a disjunctive programming formulation of \eqref{eq.disjuncsystem}. The nonnegativity condition is defined as follows: $r_v \ge 0$, and for all $e' \in \cEp(v)$, $q_{e'} \ge 0$. The upper bound condition is defined as follows: for all $e' \in \cEp(v)$, $q_{e'} \le \ell_{e'}$. We first reformulate and classify the conjunctions of \eqref{eq.disjuncsystem} into the following two families of conjunctions:
\begin{enumerate}
	\item $r_v \le 0$, and the nonnegativity and the upper bound conditions hold;
	\item  there exists $(e',i') \in \cEIp(v)$, $r_{v} \le   \delta - \tau_{ve'i'}(q_{e'})$, and the nonnegativity and the upper bound conditions hold.
\end{enumerate}
We find that the nonnegativity condition holds for each conjunction. Then, to check the zero condition of each conjunction, we set constants to zeros. For the first family of conjunctions, setting $\ell_{e'} = 0$, then $r_v = 0$ and $q_{e'} = 0$.  For the second family of conjunctions, expanding $\tau_{ve'i'}(q_{e'}) = d(v,v_i')+ \mathbf{1}_{i'=a}q_{e'} +\mathbf{1}_{i'=b}(\ell_{e'} - q_{e'})$, and setting $d(v,v_i') + \mathbf{1}_{i'=b}\ell_{e'}  = 0$ and $\ell_{e'} = 0$, then $r_v = 0$ and $q_{e'} = 0$. Now we can apply the disjunctive programming  \Cref{lem.dp}, and create duplicated variables of  $r_v,q_{e'}$ for each conjunction.  Regarding the first family of conjunctions, the duplicated variable and constraint of $q_{e'} \le \ell_{e'}$ give rise to that $q_{ve'} \le x_v \ell_{e'}$; for the second family of conjunctions, for all $(e',i') \in \cEIp(v)$, the duplicated variable and constraint of $q_{e'} \le \ell_{e'}$ give rise to that $q_{ve'i'} \le z_{ve'i'} \ell_{e'}$. Since $  x_{v}+ \sum_{(e', i') \in \cEIp(v)} z_{ve'i'} = 1$, the last family of duplicated constraints is (LP) equivalent to  a single constraint $q_{ve'} \le \left(1- \sum_{i':(e', i') \in \cEIp(v)} z_{ve'i'} \right) \ell_{e'}$ with a single duplicated variable $q_{ve'}$ such that the aggregation constraint of $q_{e'}$ becomes $q_{e'} = \sum_{v: (v,i') \in \cEIp^{-1}(e')} q_{ve'i'}$. Finally, we find that \eqref{eq.disjuncsystem} is equivalent to \eqref{eq.dpsystem}, and thus, $\cD_v^1 = \cD_v^2 \subseteq \conv(\cD_v^2)$. By \Cref{lem.dp}, $\conv(\cD_v^2) = \bar{\cD}_v^2$. As $\cD_v^1 \subseteq \bar{\cD}_v^1$ and $\bar{\cD}_v^1$ is a linear program and thus convex, so $ \conv(\cD_v^2) \subseteq \bar{\cD}_v^1$. Then, the result follows.
\end{proof}

Using \Cref{thm.dp}, we obtain a disjunctive programming formulation   of the delimited edge model \eqref{milpe} for the CSC problem:
\begin{subequations}
\label{milpbp}
	\begin{align}
   	\min & \sum_{e' \in \cE} y_{e'} \label{milpbp.obj} \\
  	& w_{e} \ge y_{e'}   &  e \in E, e' \in \cEc(e) \label{milpbp.completecover1}\\
      	& w_{e} \le  \sum_{e' \in \cEc(e)}y_{e'} 	&  e \in E \label{milpbp.completecover2}\\
      	& x_{v}\geq 1-\sum_{e\in E(v)} (1-w_e) &  v\in V \label{milpbp.enforcex1}\\
      	& x_{v}\leq w_e &  v\in V, e\in E(v) \label{milpbp.enforcex2}\\
      	&  x_{v} + \sum_{(e', i') \in \cEIp(v)} z_{ve'i'} = 1  &   v\in V \label{milpdp.lcoversos} \\
	&   z_{ve'i'} \le y_{e'}  &  v \in V,  (e',i') \in \cEIp(v) \label{milpdp.indicatorei}\\  
&   r_v = \sum_{(e', i') \in \cEIp(v)} r_{ve'i'} & v \in V \label{milpdp.dpr}\\
& q_{e'} = q_{ve'} +\sum_{i': (e',i') \in \cEIp(v)} q_{ve'i'}   & v \in V, e' \in \cEp(v) \label{milpdp.dpq}\\
 &   r_{ve'i'} \le R_{ve'i'}(q_{ve'i'}, z_{ve'i'}) & v \in V ,  (e',i') \in \cEIp(v) \label{milpdp.coverdist-edge} \\
  &  q_{ve'i'} \le z_{ve'i'} \ell_{e'} & v \in V, (e',i') \in \cEIp(v) \label{milpbp.qimpliedbd-edge} \\
  &  q_{ve'} \le \left(1- \sum_{i':(e', i') \in \cEIp(v)} z_{ve'i'} \right) \ell_{e'} & v \in V, e' \in \cEp(v)  \label{milpbp.qimpliedbd-node} \\
  	&  \ell_e (1 - w_e) \le  r_{v_a} + r_{v_b}	& e \in E \label{milpbp.covere}\\
   &   w_e  \in \{0,1\} & 	e \in E \label{milpbp.varyw}\\
   & x_{v} \in \{0,1\}  & v \in V \label{milpbp.varindicator1}\\
   &  z_{ve'i'} \in \{0,1\} & v \in V, (e',i') \in \cEIp(v)\label{milpbp.varindicator3}\\
   &  y_{e'} \in \{0,1\} &  e'  \in E \label{milpbp.varindicator2}\\
  	&  q_{e'} \ge 0  &  e'  \in E \label{milpbp.varcontinuous3}\\
 	& r_v \ge 0  & v \in V \label{milpbp.varcontinuous1}\\
 	&  q_{ve'} \geq 0 &   v \in V, e' \in \cEp(v) \label{milpbp.varcontinuous4} \\
 	&  r_{ve'i'}, q_{ve'{i'}} \ge 0  & v \in V, (e',i') \in \cEIp(v)\label{milpbp.varcontinuous2}
	\end{align}
\end{subequations}

The disjucntive programming formulation is same as the big-M formulation, except that the variable $r_v$ are modelled by different constraints. Moreover, the disjunctive programming formulation has a tighter continuous relaxation than the big-M formulation.
\begin{proposition}
The optimal value of the continuous relaxation of \eqref{milpbm} is smaller than  or equal to the optimal value of the continuous relaxation of \eqref{milpbp}.
\end{proposition}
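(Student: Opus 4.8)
The plan is to reduce this global comparison of two linear relaxations to the per-vertex inclusion already established in \Cref{thm.dp}. Both relaxations minimize the same objective $\sum_{e' \in E} y_{e'}$, which depends only on the placement variables $y$. Hence it suffices to exhibit a map sending every feasible point of the continuous relaxation of \eqref{milpbp} to a feasible point of the continuous relaxation of \eqref{milpbm} that leaves $y$ unchanged: this shows that the feasible region of the relaxation of \eqref{milpbp}, projected onto the variables common to both programs, is contained in that of \eqref{milpbm}, and minimizing the same linear objective over a smaller set yields a value that is at least as large. The first step I would carry out is therefore a careful bookkeeping of which constraints the two formulations share. Inspecting \eqref{milpbm} and \eqref{milpbp}, the objective, the complete-cover constraints, the enforcement of $x_v$, the partition equation \eqref{milpbm.lcoversos}/\eqref{milpdp.lcoversos}, the inequalities $z_{ve'i'} \le y_{e'}$, the coverage constraint \eqref{milpbm.covere}/\eqref{milpbp.covere}, and the domains of $w,x,z,y$ are all identical; the formulations differ only in the block that defines $r_v$, namely \eqref{milpbm.wbdl}--\eqref{milpbm.coverdist-edge} versus \eqref{milpdp.dpr}--\eqref{milpbp.qimpliedbd-node} together with the extra duplicated variables $r_{ve'i'},q_{ve'i'},q_{ve'}$.

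Next I would localize the argument vertex by vertex. Given a feasible point of the continuous relaxation of \eqref{milpbp}, its restriction to the variables $\bigl(x_v, (z_{ve'i'})_{(e',i')\in\cEIp(v)}, (q_{e'})_{e'\in\cEp(v)}, r_v\bigr)$ together with the duplicated variables attached to $v$ satisfies exactly the continuous relaxation of the disjunctive subsystem \eqref{eq.dpsystem}, i.e.\ it lies in $\bar{\cD}_v^2$. By \Cref{thm.dp} we have $\bar{\cD}_v^2 \subseteq \bar{\cD}_v^1$, so after projecting out the duplicated variables $r_{ve'i'},q_{ve'i'},q_{ve'}$ the remaining coordinates satisfy the continuous relaxation of the big-M subsystem \eqref{eq.bigmsystem}; in particular they satisfy \eqref{milpbm.wbdl} and \eqref{milpbm.coverdist-edge}. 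Performing this projection simultaneously for every $v \in V$ and keeping all shared variables fixed, I obtain a candidate point for the relaxation of \eqref{milpbm}. The shared constraints listed above hold because the shared variables were not altered, and the two $r_v$-defining blocks are now both satisfied, so the candidate is genuinely feasible for the relaxation of \eqref{milpbm} and has the same value of $y$, hence the same objective.

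The one subtlety that I expect to be the main point to get right is that the set relations in \Cref{thm.dp} are stated between regions living in different ambient spaces (the disjunctive subsystem carries the extra duplicated variables), so the inclusion $\bar{\cD}_v^2 \subseteq \bar{\cD}_v^1$ must be read as an inclusion of \emph{projections} onto the common coordinates $(x_v,z_{ve'i'},q_{e'},r_v)$; I would make this reading explicit before invoking it. Beyond that, the only thing to verify is that the projection coupling between different vertices is harmless, which holds precisely because the per-vertex blocks interact with the rest of the program only through the shared variables $x_v,z_{ve'i'},q_{e'},r_v,y_{e'}$, all of which the projection preserves. With the feasible-point map and the objective invariance in hand, the inequality between the two optimal relaxation values follows immediately.
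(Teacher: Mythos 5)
Your proof is correct and takes essentially the same route as the paper, whose entire argument is a one-line appeal to \Cref{thm.dp} stating that the big-M system is a relaxation of the disjunctive programming system for each vertex. Your additional bookkeeping---reading $\bar{\cD}_v^2 \subseteq \bar{\cD}_v^1$ as an inclusion of projections onto the shared coordinates $(x_v, z_{ve'i'}, q_{e'}, r_v)$ and checking that the per-vertex blocks couple to the rest of the program only through variables the projection preserves---merely makes explicit what the paper leaves implicit.
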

\begin{proof}
Due to \Cref{thm.dp}, the big-M system is a relaxation of the disjunctive programming system, then the result follows.
\end{proof}

Moreover, we can eliminate redundant variables  $ r_{ve'i'} $ and the associated constraints \eqref{milpdp.coverdist-edge} by aggregating these constraints with \eqref{milpdp.coverdist-edge}. 	This results in a set of constraints:
\begin{equation}
 	r_v \le \sum_{(e', i') \in \cEIp(v)} R_{ve'i'}(q_{ve'i'}, z_{ve'i'}), v \in V.
\end{equation}

\section{Subset no good inequalities}
The proposed delimited edge model has three MILP formulations. One may want to add valid inequalities to strengthen these formulations. For MILP formulations of the edge-vertex model, there are simple valid inequalities \cite{pelegrin2023continuous} which concern  edge cover conditions for edges incident to one vertex. Those inequalities involve placement variables associated with vertices, which are not present in the edge model, so the inequalities are unusable here.

We propose new valid inequalities for the edge model that concern edge cover conditions for multiple edges.
We introduce some notation and definitions.
For a subset $A \subseteq E$ of edges, we define $V(A):=\{v \in V: \exists e \in A, \exists i \in \{a,b\}, v = e(i)\}$ the set of vertices which are ends of at least one edge in $A$. We define $m:V(A) \to \cEIp(A)$ a map between $v \in V(A)$ and a partial cover $(e',i') \in \cEIp(v)$, and denote by $M$ all such maps.

A \textit{cover pattern} is a pair of two candidate points that are arguments of the two maximums in \eqref{eq.coverdlimitedge}. 
We define $\cEIp(A):=  \{\{(v,m(v))\}_{v \in V(A)}\}_{m \in M}$ the set of \emph{cover patterns} of $A$. Thereby, each cover pattern $\pi \in \cEIp(A)$ is  a set of pair $(v,(e',i'))$, where index $v \in V(A)$. We define the \emph{edge projector} of the $\cEIp(A)$ the set $\Pr(\cEIp(A)):=\{e \in E: \exists v \in  V(A), m \in M, i' \in \{a,b\}, m(v) = (e,i')\}$, which are edges that can potentially cover edges of $A$.

 Regarding the reformulated CSC problem \eqref{cfl.re} and its formulation \eqref{milpe}, we look at   the edge covering conditions on  $A$.   Reusing variables in formulation \eqref{milpe}, we can represent the feasibility problem (covering condition) for  edges in $A$ as follows:
 
  \begin{subequations}
\label{eq.subsystem}
\begin{align}
  &  \ell_e  \le  r_{v_a} + r_{v_b}	& e \in A  \\
  &  \textup{if } z_{ve'i'} = 1, \textup{then }  r_{v} = \delta - \tau_{ve'i'}(q_{e'}) &  v \in V(A), (e',i') \in \cEIp(v) \\
  & \sum_{(e',i') \in \cEIp(v) } z_{ve'i'}  = 1 & v \in V(A) \\
   &  z_{ve'i'} \in \{0,1\}  &  v \in V(A), (e',i') \in \cEIp(v) \label{eq.subsystem.sos1}\\
  	& q_{e'} \in [0, \ell_{e'}] &  e' \in \Pr(\cEIp(A)) \\
  	& r_{v} \geq 0 & v \in V(A).
	\end{align}
	\end{subequations}

In fact, if we let $A = E$, then the above system is exactly a simplified edge model for the CSC problem without any delimitation of complete covers.
Every cover pattern $\pi  \in \cEIp(A)$ gives a binary assignment $z(\pi)$ of $ z$, such that $\forall (v,e',i') \in \pi$, $z(\pi)_{ve'i'} = 1$. It follows that $z(\pi)$ satisfies the SOS type-1 constraints \eqref{eq.subsystem.sos1}.  We define the following \emph{subset no good inequality} associated with $\pi$:
\begin{equation}
\label{eq.nogood}
	\sum_{(v,e',i') \in \pi} (1- z_{ve'i'})\ge 1.
\end{equation}
We denote the above inequality by $\ineq(\pi)$, and define the \emph{rank} of the inequality as $|\pi|-1$.

With $z$ fixed to $z(\pi)$,  the system \eqref{eq.subsystem} is a linear inequality system, which we call the \emph{feasibility LP problem} associated with $\pi$. For small $|A|$, this problem can be solved very fast. If it is infeasible, then the cover pattern $\pi$ is called \emph{infeasible}. We have the following observation on the validity of subset no good inequality.
\begin{proposition}
	The subset no good inequality  \eqref{eq.nogood} associated with any infeasible cover pattern is valid  for  \eqref{milpe}.
\end{proposition}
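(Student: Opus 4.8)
The plan is to argue by contradiction, proving that no feasible solution of \eqref{milpe} (equivalently, of the reformulation \eqref{milpsos}, which carries the $z$-variables appearing in \eqref{eq.nogood}) can violate the inequality whenever $\pi$ is infeasible. First I would record the only way \eqref{eq.nogood} can fail: since every $z_{ve'i'}\in\{0,1\}$ and each summand $1-z_{ve'i'}$ is nonnegative, the sum $\sum_{(v,e',i')\in\pi}(1-z_{ve'i'})$ is strictly below $1$ exactly when $z_{ve'i'}=1$ for all $(v,e',i')\in\pi$, i.e.\ precisely when the solution restricts to $z(\pi)$ on the pattern. It therefore suffices to show that the assignment $z=z(\pi)$ cannot be part of any feasible solution.

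So suppose a feasible solution of \eqref{milpsos} has $z_{v,m(v)}=1$ for every $v\in V(A)$; I would extract from it a feasible point of the feasibility LP \eqref{eq.subsystem} with $z$ fixed to $z(\pi)$, contradicting infeasibility of $\pi$. Two reductions are immediate. For each $v\in V(A)$, the equality $x_v+\sum_{(e',i')\in\cEIp(v)}z_{ve'i'}=1$ from \eqref{milpsos.lcoversos}, together with $z_{v,m(v)}=1$, forces $x_v=0$ and $z_{ve'i'}=0$ for every other $(e',i')$; hence $\sum_{(e',i')}z_{ve'i'}=1$, which is the SOS-type constraint of \eqref{eq.subsystem}, and the induced $z$ agrees with $z(\pi)$ on all of $V(A)$. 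Next, the active indicator constraint \eqref{milpsos.coverdist-edge} gives $r_v\le\delta-\tau_{v,m(v)}(q_{e'})$; since $r_v\ge 0$ this already forces $\delta-\tau_{v,m(v)}(q_{e'})\ge 0$, so defining the extracted value $\hat r_v:=\delta-\tau_{v,m(v)}(q_{e'})\ge r_v\ge 0$ with $\hat q:=q$ produces exactly the equality $r_v=\delta-\tau_{v,m(v)}(q_{e'})$ demanded by \eqref{eq.subsystem}, while the bounds $q_{e'}\in[0,\ell_{e'}]$ are inherited verbatim. Thus the extracted triple $(\hat q,\hat r,z(\pi))$ already satisfies every constraint of \eqref{eq.subsystem} except possibly the covering inequalities.

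The remaining, decisive step is to recover $\ell_e\le r_{v_a}+r_{v_b}$ for every $e\in A$ from the model's covering constraints \eqref{milpsos.covere}, which read $\ell_e(1-w_e)\le r_{v_a}+r_{v_b}$, and I expect this to be the main obstacle. The completely-covered variables $w_e$ give the full model an escape that \eqref{eq.subsystem} lacks: on an edge with $w_e=1$ the model imposes only $0\le r_{v_a}+r_{v_b}$, so matching an infeasible residual-cover pattern need not immediately contradict feasibility. The key to closing the argument is to show that this escape is unavailable on $A$: one needs $w_e=0$ for each $e\in A$, whence $\ell_e\le r_{v_a}+r_{v_b}\le\hat r_{v_a}+\hat r_{v_b}$ and the extracted point lies in \eqref{eq.subsystem}, contradicting the infeasibility of $\pi$. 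When the edges of $A$ are not completely coverable this is automatic, since $\cEc(e)=\varnothing$ forces $w_e=0$ through \eqref{milpsos.completecover2}; in the general case I would argue that completely covering some $e\in A$ (by \eqref{milpsos.completecover2}, through an installed point of $\cEc(e)$) makes that edge's endpoints reachable within $\delta$ and hence, using the definition \eqref{eq.eipdef} of $\cEIp$ and of the partial covers $\cEp$, permits a choice of endpoint covers satisfying \eqref{eq.subsystem} — contradicting infeasibility of $\pi$. Securing $w_e=0$ on $A$ collapses the model's covering constraints onto those of \eqref{eq.subsystem} and completes the proof of validity.
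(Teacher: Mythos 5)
Your first two reductions are sound, and they are in fact more careful than the paper's own proof, which consists of two sentences asserting that \eqref{eq.subsystem} is a relaxation of \eqref{milpe} and that \eqref{eq.nogood} only excludes the single infeasible assignment $z(\pi)$. But the decisive third step of your plan contains a genuine gap, located exactly where you suspected. First, your fallback case is vacuous: under \Cref{assumption} every edge satisfies $e \in \cEc(e)$ (any two points of $e$ are at distance at most $\ell_e \le \delta$), so $\cEc(e) \neq \varnothing$ always, and $w_e = 0$ can never be forced on $A$ via \eqref{milpsos.completecover2}. Second, your general-case argument conflates two different statements. Complete coverage of some $e \in A$ by an installed point on $c \in \cEc(e)$ does guarantee that \emph{some} choice of endpoint covers gives a feasible pattern (the residuals of that point at $v_a, v_b$ sum to at least $\ell_e$), but it says nothing about the \emph{fixed} pattern $\pi$, whose $z$-assignment is frozen in the feasibility LP \eqref{eq.subsystem}. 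Hence no contradiction with the infeasibility of $\pi$ follows, and the $w_e$-escape remains open.

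Moreover, the escape is genuinely available, so the strict claim you set out to prove (no feasible point of \eqref{milpsos} violates \eqref{eq.nogood}) is not salvageable by any repair of that step. Take $\delta = 1$; an edge $e = (v_a, v_b)$ of length $1$; a vertex $z$ joined to $v_a$ and $v_b$ by edges of length $1/2$, so that $c = (v_a, z) \in \cEc(e)$; pendant edges $g_a = (v_a, s)$ of length $0.9$ continued by $e_1' = (s, s_1)$ of length $1$, and symmetrically $g_b = (v_b, t)$, $e_2' = (t, t_1)$. Install points on $c$ and at the $s$- and $t$-ends of $e_1', e_2'$. Then $w_e = 1$ is forced, $w_{g_a} = w_{g_b} = 0$, $x_{v_a} = x_{v_b} = 0$, and the assignment $z_{v_a(e_1',s)} = z_{v_b(e_2',t)} = 1$ is feasible: it caps $r_{v_a}, r_{v_b}$ at $0.1$, which suffices for $g_a, g_b$ since the $s$- and $t$-points contribute residuals $0.9$, while \eqref{milpsos.covere} for $e$ is vacuous because $w_e = 1$. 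Yet the connected pattern $\pi = \{(v_a,(e_1',s)), (v_b,(e_2',t))\}$ is infeasible for $A = \{e\}$ since $0.1 + 0.1 < 1 = \ell_e$, so $\ineq(\pi)$ cuts off this feasible point. The paper's proof silently presumes exactly the step you could not close, by calling \eqref{eq.subsystem} a relaxation; what actually holds is the weaker, optimum-preserving statement: every feasible installation $(y, q, w, x)$ admits at least one $z$-completion (assign to each $v$ with $x_v = 0$ the installed pair in $\cEIp(v)$ of maximal residual) satisfying all subset no good inequalities, because for a completely covered $e \in A$ the two argmax residuals dominate those of the covering edge and hence sum to at least $\ell_e$. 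Your extraction argument is the right skeleton for proving that weaker claim, but as a proof of validity for the feasible region of \eqref{milpsos} it fails at the $w_e = 0$ step, and so, read strictly, does the paper's.
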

\begin{proof}
The subset no good inequality is a valid inequality for the  relaxation \eqref{eq.subsystem}, because it excludes a single infeasible binary assignment. Therefore, the inequality is also valid for the formulation \eqref{milpe} of the original CSC problem.
\end{proof}
Note that this implies that the proposed inequalities are valid for the indicator/big-M/disjunctive programming formulations, as these formulations are reformulations of  \eqref{milpe}.

 In the following,
we  search for infeasible cover patterns.  Two tuples $(v_1, (e'_1,i'_1)),(v_2, (e'_2,i'_2)) \in \cup_{v \in V} \cEIp(v)$  is called \emph{adjacent}, if  $e'_1 = e'_2$ or there exists $e \in E$ that $v_1=e(a), v_2=e(b)$ or $v_1=e(b), v_2=e(a)$. Thus, $ \cup_{v \in V} \cEIp(v)$ form a graph. For a cover pattern $\pi=\{(v, (e',i'))\} \in \cEIp(A)$, we can take it as a subgraph. We call  $\pi$  \emph{connected}, if its underlying subgraph is connected.  We recall that an inequality $az \le b$ dominates another inequality $a'z\le b'$, if every feasible solution to the CSC problem \eqref{milpe} and $az\le b$ is also feasible to $a'z \le b'$. Therefore, we prefer to use non-dominated valid inequalities when strengthening \eqref{milpe}. We have the following dominance relation for inequalities associated with  cover patterns in  $\cEIp(A)$.

\begin{proposition}
\label{prop.dom}
  Let $\pi \in \cEIp(A)$. If $\pi$ is not connected and infeasible, then there exists a connected infeasible subset $\pi' \subseteq \pi$ for which $\ineq(\pi')$  dominates $\ineq(\pi)$. Else, if there exists an infeasible  $\pi' \subsetneq \pi$, then $\ineq(\pi')$ dominates $\ineq(\pi)$.
\end{proposition}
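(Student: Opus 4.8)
The plan is to split the statement into two independent ingredients: a purely combinatorial \emph{subset-domination} fact that applies to any infeasible proper subset, and a \emph{localization} argument showing that in the disconnected case infeasibility is already carried by a single connected component. The first ingredient settles the ``else'' clause verbatim, and together with the second it settles the first claim.

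First I would establish the subset-domination fact. Suppose $\pi' \subsetneq \pi$ is infeasible; by the validity result preceding this proposition, $\ineq(\pi')$ is valid for \eqref{milpe}, and I claim it dominates $\ineq(\pi)$. Rewrite the two inequalities in the equivalent forms $\sum_{(v,e',i') \in \pi} z_{ve'i'} \le |\pi| - 1$ and $\sum_{(v,e',i') \in \pi'} z_{ve'i'} \le |\pi'| - 1$. For any solution of \eqref{milpe} the variables $z$ are binary, so if $\ineq(\pi')$ holds then $\sum_{(v,e',i') \in \pi} z_{ve'i'} = \sum_{(v,e',i') \in \pi'} z_{ve'i'} + \sum_{(v,e',i') \in \pi \setminus \pi'} z_{ve'i'} \le (|\pi'| - 1) + |\pi \setminus \pi'| = |\pi| - 1$, which is exactly $\ineq(\pi)$. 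Hence every feasible point of \eqref{milpe} satisfying $\ineq(\pi')$ also satisfies $\ineq(\pi)$, i.e. $\ineq(\pi')$ dominates $\ineq(\pi)$. This is precisely the ``else'' assertion, and it reduces the first assertion to exhibiting a \emph{connected} infeasible proper subset.

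Next I would prove the localization claim. Fixing $z = z(\pi)$ in the feasibility system \eqref{eq.subsystem}, the assignment activates exactly the single tuple $m(v) = (e',i')$ at each $v$, so the indicator constraints collapse to one linear equation $r_v = \delta - \tau_{ve'i'}(q_{e'})$ per vertex, coupling $r_v$ only with the coordinate variable of its installed edge; the remaining constraints are the covering inequalities $\ell_e \le r_{v_a} + r_{v_b}$ for $e \in A$ together with the box and nonnegativity bounds. Thus two tuples interact in this linear program only if they share a coordinate variable $q_{e'}$ (their installed edges coincide) or appear together in a covering inequality (their vertices are the two endpoints of an edge of $A$), and both situations are, by definition, exactly the adjacency relation underlying connectedness of cover patterns. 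Consequently \eqref{eq.subsystem} block-decomposes along the connected components of $\pi$: distinct components share neither a variable nor a constraint, so $\pi$ is feasible if and only if every component is feasible. If $\pi$ is disconnected and infeasible, some component $\pi'$ must be infeasible; being a single component it is connected, and since there are at least two components it is a proper subset. Applying the subset-domination fact to this $\pi'$ completes the first assertion.

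The step I expect to be the main obstacle is the block-decomposition of \eqref{eq.subsystem}: one must verify that the adjacency relation on $\cup_{v} \cEIp(v)$ captures \emph{every} coupling between tuples in the fixed-$z$ program, so that non-adjacent tuples are genuinely independent. The only delicate point is that adjacency is declared through edges $e \in E$ rather than $e \in A$; this merely over-connects the graph and is therefore harmless, since a covering inequality exists only for $e \in A \subseteq E$, so non-adjacency still guarantees the absence of a shared covering constraint. Once this is checked, the equivalence ``$\pi$ feasible iff every component feasible'' is immediate, and the remainder reduces to the elementary binary inequality above.
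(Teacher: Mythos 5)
Your proposal is correct and follows the same two-step skeleton as the paper's proof: first the elementary subset-domination fact (the paper justifies it in one line by noting $\ineq(\pi')$ excludes more binary points, which your counting argument with $\sum_{(v,e',i')\in\pi} z_{ve'i'} \le |\pi|-1$ makes explicit), then the reduction of the disconnected case to a connected infeasible component. The only difference is one of rigor: the paper simply asserts that an infeasible disconnected $\pi$ must contain a connected infeasible subset, whereas you supply the justification via the block-decomposition of the fixed-$z$ feasibility LP \eqref{eq.subsystem} along connected components, correctly observing that adjacency over $E$ rather than $A$ only over-connects and is therefore harmless.
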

\begin{proof}
Note that, for any subset $\pi'$ of $\pi$,   $\ineq(\pi')$  dominates $\ineq(\pi)$, because $\ineq(\pi')$  excludes more binary points than  $\ineq(\pi)$.
	Assume that $\pi$ is not connected and infeasible. As $\pi$ is infeasible, there must exist a connected subset $\pi'$ that is infeasible. Assume that $\pi$ is  connected and there exists an infeasible subset  $\pi' \subsetneq \pi$, then $\ineq(\pi')$ dominates $\ineq(\pi)$.
\end{proof}

Using the above dominance, when searching for infeasible cover patterns, one can avoid
solving  the feasibility LP associated with some cover patterns. For a nonconnected cover pattern, we do not need to check its feasibility, as there must be some inequalities dominating its inequality when it is infeasible. Moreover, for a connected cover pattern, we do not need to consider it, if some of its subsets are infeasible. The number of  cover patterns grows combinatorially with $|A|$, so one usually searches for cover patterns for small $|A|$. In addition, the induced subset no good inequalities are also sparser for small $|A|$.

We propose an enumeration algorithm \Cref{algo:gen} that generates all at most rank-$K$ subset no good inequalities. The  algorithm exploits \Cref{prop.dom} to find non dominated inequalities, and  it outputs a set of cover patterns, which induce no good inequalities. For the case $k = |A| = 1$, we can explicitly solve the feasibility LP. Finally, the generated inequalities can be added to the proposed MILP formulations.

\begin{algorithm}
\SetAlgoLined
   \textbf{Input:} Network $N= (V, E)$, cover pattern mapping function $\cEIp$, max rank $K$\;
   \textbf{Output:} a set $\Pi$ of cover patterns\;
   Initialize set $\Pi \gets \varnothing$\;
    \For{each rank $k \in [K]$}{
    	Initialize set $\Pi^k \gets \varnothing$\;
    	\For{each subset $A \subseteq E$ of $k$ edges}{
    	\uIf{$k = 1$}{
        	\For{each cover pattern $\pi \in \cEIp(A)$}{
         	Let $\pi = \{(v_1,(e'_1,i'_1)), (v_2,(e'_2,i'_2))\}$\;
         	\If{$\max_{q_{e'_1} \in [0, \ell_{e'_1}]}(\delta - \tau_{v_1e'_1i'_1}(q_{e'_1}),0) + \max_{q_{e'_1} \in [0, \ell_{e'_1}]}(\delta - \tau_{v_2e'_2i'_2}(q_{e'_2}),0) < \ell_{e'}$}
         	{
          	$\Pi^k \gets \Pi^k \cup \{\pi\}$\;
         	}
        	}
    	}
    	\Else{
     	\For{each cover pattern $\pi \in \cEIp(A)$}{
         	\If{$\pi$ is not connected}{
            	\Continue \;
         	}
         	solve the feasibility LP problem associated with $\pi$\;
         	\If{it is infeasible}{
            	$\Pi^k \gets \Pi^k \cup \{\pi\}$\;
         	}
     	}
     	}
 	}
    $\Pi \gets \Pi \cup \Pi^k$
    }
 \Return $\Pi$\;
 \caption{generation of subset no good inequalities}
 \label{algo:gen}
\end{algorithm}

\section{A comprehensive classification of models and formulations}
\label{sec.compare}

We have proposed several new formulations and techniques for the edge model,
and there are also existing formulations for the edge-vertex model \cite{pelegrin2023continuous}. Before we compare these formulations, we need a unified language to distinguish them. In this section, we  classify and compare the proposed models and formulations with the existing ones.

We suggest a method to name formulations by their template models  and employed algorithmic techniques.
 The names of the formulations are abbreviated as ``M-T'', where ``M'' can be  ``EF'' (the formulation of the edge model)/``EVF'' (the formulation of the edge-vertex model)/``LEVF'' (the formulation of the long edge-vertex model), and ``T'' is a combination of labels ``P'',  ``I'', ``D'' and ``V'':  ``P'' means that the formulation is preprocessed by delimitation and bound tightening techniques;  ``I'' means that the formulation is modeled directly using indicator constraints;  ``D'' means  that the formulation is modeled via the disjunctive programming reformulation technique; ``V'' means that the model is added with some valid inequalities, and it has two variants ``V1'' and ``V2'' (which we will explain later).
 
 We remark that,  in  LEVF (long edge-vertex model),   a technique from \cite{pelegrin2023continuous} can directly model the cover condition  on an edge of length greater than $2\delta$. Because this model does not require splitting the long edges in a graph into small edge pieces shorter than $\delta$, this model uses a constant number of variables and constraints for covering each  long edge.

Under this naming system, we list all known MILP formulations for CSC:
\begin{itemize}
  \item EF: the simple big-M formulation for the edge model  \cite{Hamacher20};
  \item EF-P: the big-M formulation for the edge model  preprocessed by delimitation and  bound tightening;
  \item EF-PI:  the indicator formulation of the edge model with  the preprocessing;
  \item EF-PD:  the  disjunctive programming formulation  with  the preprocessing;
  \item EF-PV1: the big-M formulation for the edge model with the preprocessing and  subset no good inequalities for $|A| = 1$;
  \item EF-PV2: the big-M formulation for the edge model with the preprocessing and subset no good inequalities for $|A| \le 2$;
  \item EVF: the simple  big-M formulation for the edge-vertex model  \cite{pelegrin2023continuous};
  \item EVF-P: the big-M formulation for the edge-vertex model with  the preprocessing \cite{pelegrin2023continuous};
  \item LEVF-P: the big-M formulation for the long-edge-vertex model with  the preprocessing  \cite{pelegrin2023continuous}.
\end{itemize}
We remark that the preprocessing procedure is the same for all the models, and the procedure is presented in \cite{pelegrin2023continuous}. The study in  \cite{pelegrin2023continuous} already shows that the preprocessing techniques  can uniformly improve any formulation of the (long) edge-vertex model. Thus, we only report the performances of formulations of the (long) edge-vertex model after preprocessing. We categorize the formulations  in  \Cref{tab.comps}.  The main difference between the formulations in the same row is whether the model contains the vertices as candidate points.

We only derive the disjunctive programming and indicator reformulation for the edge model. In principle, one can also apply these reformulations to the edge-vertex model. However, the formulations may become  more complicated. We remark that we can only derive the long edge formulation for the edge-vertex model.

\begin{table}[]
\begin{tabular}{|l|l|l|}
\hline
          	Techniques                    	& Edge model	& (Long ) edge-vertex model \\ \hline
Big-M  (no preprocessing)                  	& EF        	& EVF           	\\ \hline
Big-M and preprocessing               	& EF-P      	& EVF-P         	\\ \hline
Big-M, preprocessing, and valid inequalities              	&  EF-PV1, EF-PV2 &        	\\ \hline
Disjunctive programming, and preprocessing & EF-PD      	&               	\\ \hline
Indicator constraints, and preprocessing & EF-PI     	&               	\\ \hline
Big-M, preprocessing, and long edge modeling                   	&           	& LEVF-P         	\\ \hline
\end{tabular}
\caption{The classifications of the formulations}\label{tab.comps}
\end{table}

\section{Computational experiments}

In this section, we present the computational experiments comparing the  formulations from \Cref{sec.compare}.

\subsection{Experiment Setup}
We describe the setup of the experiments. The benchmarks, (raw) instance-wise computational results, and source code are publicly released on our project website: \href{https://github.com/lidingxu/cflg/}{https://github.com/lidingxu/cflg/}.

\textbf{Benchmarks.}
We use the same data sets as in \cite{pelegrin2023continuous}. Two data sets come from the literature: \ccity, \kgroup; and one data set is generated synthetically: \random. The \kgroup data set consists of 23 prize-collecting Steiner tree problem instances from \cite{Ljubic2006}. These random geometric instances are designed to have a local structure somewhat similar to street maps. Nodes correspond to random points in the unit square.   The \ccity data set consists of real data of 9  street networks for some German cities, and it was first used in \cite{Kalsics}. The length of each edge is the length of the underlying street segment. The \random data set consists of 24  random network instances generated via Erdős-Rényi binomial method. 

In our test, we group them into two benchmarks of different scales. We put instances with less than 150 edges in a benchmark called \esmall; we put instances with more than 150 edges in a benchmark called \elarge. \Cref{tab.bench} shows some statistics of these two benchmarks.

\begin{table}[]
\centering
\begin{tabular}{|l|r|r|}
\hline
        	Statistics                  	&  \esmall   & \elarge \\ \hline
 Number of instances             	&   32 & 24         	\\ \hline
Min number of edges                  	& 9       	& 185           	\\ \hline
Medium number of edges             	&  69 & 699         	\\ \hline
Max number of edges               	& 148     	&  1035       	\\ \hline
Average number of edges             	&  71 &  584    	\\ \hline
Average graph density           	&  137.5 &   1162.1   	\\ \hline
\end{tabular}
\caption{The statistics of the benchmarks} \label{tab.bench}
\end{table}

\textbf{Covering radii.}
For each network, we define two sets of covering radii: ``Small'' equal to [Average Edge
Length], and ``Large'' equal to ×2 [Average Edge Length], respectively. Each instance is tested using these two different coverage radii. Thus, this doubles the number of test instances from 56 to 112.

\textbf{Development environment.}
The experiments are conducted on a computer with Intel Core i7-6700K  CPU @ 4.00GHZ and 16GB main memory. We use JuMP \cite{DunningHuchetteLubin2017}  to implement our models and interact with MILP solvers. Specifically, we use ILOG \cplex 22 to solve our models. \cplex's parameters are set as their defaults, except that we disable parallelism for each test and set  the MIP absolute gap to 1 (due to the integral objective). Every test runs on a single thread with a time limit of  1800 CPU seconds.

\subsection{Performance metrics}
We describe the performance metrics. Their statistics will be used to evaluate the model performance.

Let $\underline{v}$ be a dual bound and $\overline{v}$ be a primal bound obtained after solving some of the formulations described above,  the relative dual gap  is defined as:
\begin{equation*}
   \sigma := \frac{\overline{v} - \underline{v}}{\overline{v}}.
\end{equation*}
A smaller relative dual gap indicates better primal and dual behavior of the model.

Given a network, its splitted network  has  edge length at most $\delta$.
Let $n_{sd}$ be the number of nodes of the splitted network,  we note that  a trivial primal solution to the CSC problem on this network is the nodes of the splitted network. Therefore, to normalize the primal solution value, we define the relative primal bound  \[v_r := \frac{\overline{v}}{n_{sd}}.\] If $v_r < 1$, then the model finds a solution better than the trivial one.

In order to aggregate performance metrics, we compute their shifted geometric means (SGMs), which provide a
measure for relative differences.  The SGM of values $v_1,...,v_M \geq 0$ with shift $s \geq 0$ is defined
as
\begin{equation*}
  \left(\prod_{i=1}^M (v_i + s)\right)^{1/M} - s.
\end{equation*}

We record the following performance metrics of each instance for each model, and compute their SGMs:

\begin{enumerate}
    \item $t$: the total running time in CPU seconds, with a shifted value of 1 second;
   \item $\sigma$: the relative dual gap, with a shifted value of $1\%$;
      \item $v_r$: the relative primal bound, with a shifted value of  $1\%$.
    
\end{enumerate}

 The running time excludes the time of preprocessing, because we find that the preprocessing usually needs only at most several seconds; for formulations with valid inequalities, the time includes the time to separate valid inequalities. We observe that \cplex may not be able to read and load the data of certain formulation for some instance.  We say that an instance is accepted by a formulation, if the solver can read the formulation of that instance into the machine memory; the instance is solved by this formulation, if an optimal solution could be found within the time limit. When an instance is unaccepted by a formulation, this is usually due to memory issues that the size of the formulation is too large. 
For an unaccepted instance of a formulation, we set its performance metric as  $t= 1800$, $\sigma = 1$ and $v_r = 1$.

\subsection{Analysis of results}

We next look at the computational results of different formulations. First, regarding edge and edge-vertex models, we look at the effects of  preprocessing techniques. Secondly, we compare different models. Then, we compare the proposed formulations for the edge model. Finally, we discuss the choice of  suitable formulations for problems on different scales. Throughout our comparison of existing and new formulations, the readers obtain a practical guide for selecting the most appropriate formulation.

We look at the aggregated results in \Cref{tab.small,tab.large}, where each  table contains the results for the \esmall and \elarge benchmarks, respectively. In each table, the columns list performance metrics under the Small and Large  covering radii, and the rows list performance metrics of the  following formulations: EF, EF-P, EF-PI, EF-PD, EF-PV, EF-PV2, EVF-P, and LEVF-P. Therefore, in each table, for each formulation and covering radius, we also record the S/A metrics, where S denotes the number of solved instances, and A denotes the number of accepted instances. We report the detailed results in \Cref{table:root:detailed} of the Appendix.

\begin{table}[]
\centering
\begin{tabular}{|l|*{4}{r}|*{4}{r}}
\cline{1-9}
\multirow{2}{*}{Formulation} & \multicolumn{4}{l|}{Small radius}   & \multicolumn{4}{l|}{Large radius} 	 \\ \cline{2-9}
                  		 &  \multicolumn{1}{l|}{t (secs)} & \multicolumn{1}{l|}{$\sigma$} & \multicolumn{1}{l|}{$v_r$} & \multicolumn{1}{l|}{S/A} & \multicolumn{1}{l|}{t (secs)} & \multicolumn{1}{l|}{$\sigma$} & \multicolumn{1}{l|}{$v_r$} & \multicolumn{1}{l|}{S/A}  \\ \hline
EF & 455.7 & 41.4\% & 55.1\% & \multicolumn{1}{l|}{8/20} & 223.2 & 40.7\% & 43.7\% & \multicolumn{1}{l|}{12/19} \\ 
EF-P & 269.4 & 21.0\% & 30.3\% & \multicolumn{1}{l|}{11/31} & 21.1 & 18.0\% & 15.4\% & \multicolumn{1}{l|}{28/32} \\ 
EF-PI & 287.6 & 20.2\% & 28.9\% & \multicolumn{1}{l|}{10/32} & 24.1 & 20.3\% & 15.7\% & \multicolumn{1}{l|}{28/32} \\ 
EF-PD & 284.4 & 17.4\% & 30.2\% & \multicolumn{1}{l|}{12/31} & 42.9 & 9.6\% & 15.5\% & \multicolumn{1}{l|}{26/32} \\ 
EF-PV1 & 252.0 & 19.4\% & 28.8\% & \multicolumn{1}{l|}{11/32} & 24.8 & 6.2\% & 16.4\% & \multicolumn{1}{l|}{26/31} \\ 
EF-PV2 & 235.8 & 31.3\% & 43.9\% & \multicolumn{1}{l|}{10/22} & 109.2 & 26.1\% & 41.4\% & \multicolumn{1}{l|}{14/17} \\ 
EVF-P & 343.6 & 24.6\% & 29.5\% & \multicolumn{1}{l|}{10/32} & 37.2 & 17.1\% & 15.5\% & \multicolumn{1}{l|}{27/32} \\ 
LEVF-P & 279.9 & 21.3\% & 29.4\% & \multicolumn{1}{l|}{11/32} & 31.9 & 23.3\% & 15.8\% & \multicolumn{1}{l|}{27/32} \\ 
\cline{1-9}
\end{tabular}
\caption{Results for the \esmall benchmark (32 instances)} \label{tab.small}
\end{table}

\begin{table}[]
\centering
\begin{tabular}{|l|*{4}{r}|*{4}{r}}
\cline{1-9}
\multirow{2}{*}{Formulation} & \multicolumn{4}{l|}{Small radius}   & \multicolumn{4}{l|}{Large radius} 	 \\ \cline{2-9}
                  		 &  \multicolumn{1}{l|}{t (secs)} & \multicolumn{1}{l|}{$\sigma$} & \multicolumn{1}{l|}{$v_r$} & \multicolumn{1}{l|}{S/A} & \multicolumn{1}{l|}{t (secs)} & \multicolumn{1}{l|}{$\sigma$} & \multicolumn{1}{l|}{$v_r$} & \multicolumn{1}{l|}{S/A}  \\ \hline
EF & 1800.0 & 100.0\% & 100.0\% & \multicolumn{1}{l|}{0/0} & 1800.0 & 100.0\% & 100.0\% & \multicolumn{1}{l|}{0/0} \\ 
EF-P & 1800.8 & 57.0\% & 63.3\% & \multicolumn{1}{l|}{0/24} & 1631.5 & 53.4\% & 39.2\% & \multicolumn{1}{l|}{3/24} \\ 
EF-PI & 1801.0 & 61.3\% & 65.4\% & \multicolumn{1}{l|}{0/24} & 1608.0 & 55.2\% & 56.8\% & \multicolumn{1}{l|}{1/24} \\ 
EF-PD & 1800.3 & 67.9\% & 69.5\% & \multicolumn{1}{l|}{0/13} & 1630.8 & 63.3\% & 40.4\% & \multicolumn{1}{l|}{2/13} \\ 
EF-PV1 & 1770.0 & 52.7\% & 60.9\% & \multicolumn{1}{l|}{0/24} & 1718.2 & 59.6\% & 79.7\% & \multicolumn{1}{l|}{0/20} \\ 
EF-PV2 & 1796.2 & 69.5\% & 80.0\% & \multicolumn{1}{l|}{0/7} & 1766.5 & 68.4\% & 64.6\% & \multicolumn{1}{l|}{0/7} \\ 
EVF-P & 1800.7 & 61.8\% & 55.1\% & \multicolumn{1}{l|}{0/24} & 1659.3 & 60.5\% & 73.1\% & \multicolumn{1}{l|}{1/24} \\ 
LEVF-P & 1801.0 & 43.5\% & 46.4\% & \multicolumn{1}{l|}{0/24} & 1660.8 & 51.2\% & 25.6\% & \multicolumn{1}{l|}{2/24} \\ 
\cline{1-9}
\end{tabular}
\caption{Results for the \elarge benchmark (24 instances)} \label{tab.large}
\end{table}

\textbf{Effects of the preprocessing techniques.} In this part, we  compare the EF and EF-P formulations. We find that the EF formulation from \cite{Hamacher20} without any preprocessing performs poorly, as the solver cannot accept many instances. The EF-P formulation has a consistent and significant improvement compared to the EF formulation in terms of the running time, the gap, and finding better  solutions. For example, the EF formulation of the instances in the large benchmark is too large to be read into  \cplex, while \cplex can read and load the EF-P formulations of all instances can be and find improved solutions compared to the trivial solution. Because the preprocessing techniques can  strengthen  formulations and reduce the sizes of formulations, these properties are amenable in practice. Thus, from now on, we will only consider the preprocessed formulations.

\textbf{Comparison of models.}
We compare different models: the edge model,  the edge-vertex model, and the long edge-vertex model. We want to put these models in the same baseline, so that we can understand the effect of  modeling. Thus, we consider only  their big-M formulations that are preprocessed by delimitation and bound tightening techniques: EF-P, EVF-P, and LEVF-P. We first compare the EF-P and EVF-P formulations, which only differ in the space of candidate points. For all the benchmarks and radii, EF-P is better than EVF-P in terms of the running time, the gap, and solving solutions, except that EF-P is slightly worse in improving solutions for the \elarge benchmark under the small radius. This is because the edge model needs fewer candidate points than the edge-vertex model, and the EF-P formulation has fewer variables than the EVF-P formulation. Since we can filter out the EVF-P formulation from our comparison, we next compare the EF-P formulation (of the edge-vertex model) and LEVF-P formulation (of the long edge-vertex model). For all the benchmarks and radii, LEVF-P is better than EF-P in terms of the running time, the gap, and solving solutions, except that LEVF-P is slightly worse in improving solutions for the \esmall benchmark under the large radius. This is because the long-edge-vertex model needs fewer candidate points than the edge-vertex model for covering long edges, so that the LEVF-P formulation has fewer variables than the EF-P formulation.

\textbf{Comparison of formulations for the edge model.} As we have proposed several reformulations for \eqref{milpe}  of the edge model, we need to compare their strengths. We look at the big-M reformulation EF-P, the indicator reformulation EF-PI, and the disjunctive programming reformulation EF-PD. In addition, we also consider the   big-M formulations  strengthened by valid inequalities: EF-PV1 and EF-PV2. First, we observe that EF-PV2 is the worst, because it has a large number valid inequalities, and its  formulation is too large for \cplex. Second, EF-PI is always worse than EF-P, so the handling of indicator constraints by \cplex is not as good as our tailored big-M reformulation. Third, EF-PD and EF-PV1 are both better than EF-P for the \esmall benchmark but get worse for the \elarge benchmark. The performance of EF-PD  degrades more than EF-PV1 for large instances, because the number of additional variables and constraints of EF-PD is proportional to the number of nodes.

\textbf{Formulation selection.} Based on the aforementioned comparison and analysis, we can filter out useless formulations that are generally worse than the other formulations.  For problems in different scales,  we  need to  select appropriate formulations among the remaining ones (EF-P, EF-PD, EF-PV, and LEVF-P). The aforementioned comparison  shows that the sizes of formulations have a major impact on their performances. Thus, we select formulations according to the sizes of networks and radii. For each benchmark and radius, we use a performance profile to plot the number of instances versus the relative gap obtained by a formulation.  The formulation with a sharp increasing performance profile is considered to perform well. We show the profiles in \Cref{fig.pp_ss,fig.pp_sl,fig.pp_ls,fig.pp_ll}. From  \Cref{fig.pp_ss,fig.pp_sl}, we find that EF-PD and EF-PV1 perform well for the \esmall benchmark, EF-PD is better for small radii and EF-PV1 is better for large radii. From  \Cref{fig.pp_ls,fig.pp_ll}, we find that LEVF-P outperforms the others for the \elarge benchmark. In summary, there is no uniformly good formulation, and one needs to select formulations according to the sizes of problems. For very small problems (small networks and radii), we recommend using the EF-PD formulation; for large problems (large networks and radii), we recommend using the LEVF-P formulation; for the other problems, we recommend using the EF-PV1 formulation and keeping EF-P as an easy-to-implement fallback option.

 \begin{figure}[!h]
\centering{
\includegraphics[width = 0.8\columnwidth]{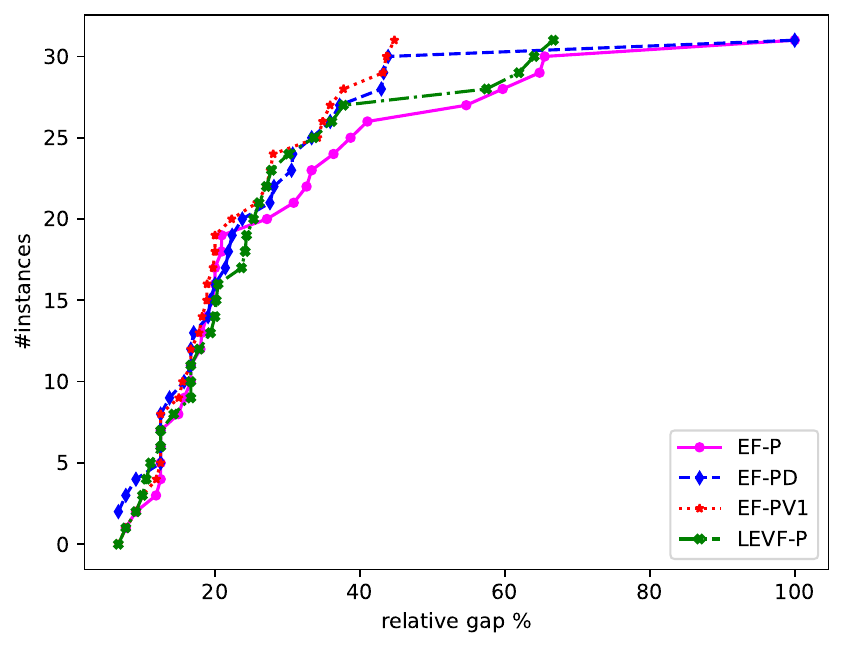}
}
\caption{Performance profile of four formulations for the \esmall benchmark and the Small radius}
\label{fig.pp_ss}
\end{figure}

 \begin{figure}[!h]
\centering{
\includegraphics[width = 0.8\columnwidth]{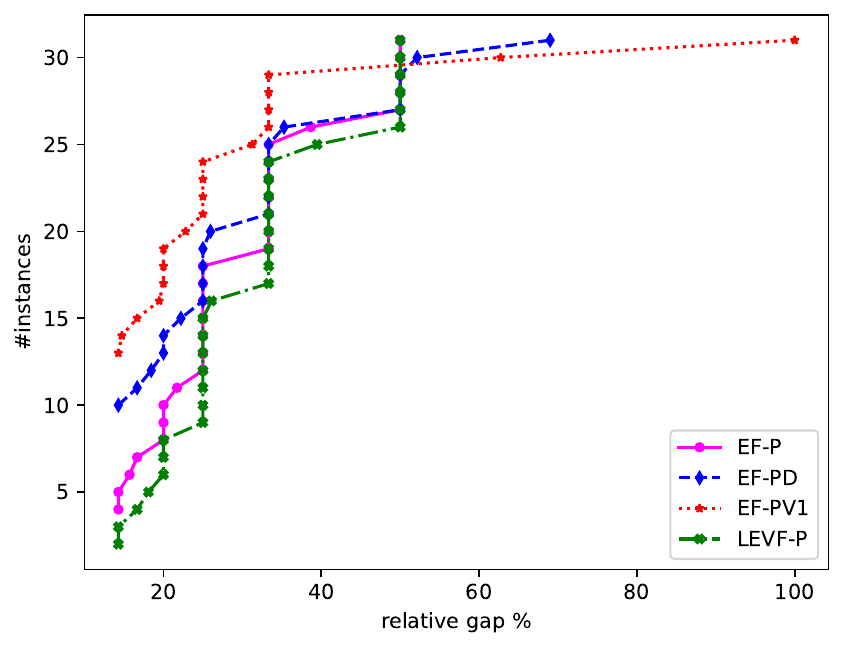}
}
\caption{Performance profile of four formulations for the \esmall benchmark and the Large radius}
\label{fig.pp_sl}
\end{figure}

 \begin{figure}[!h]
\centering{
\includegraphics[width = 0.7\columnwidth]{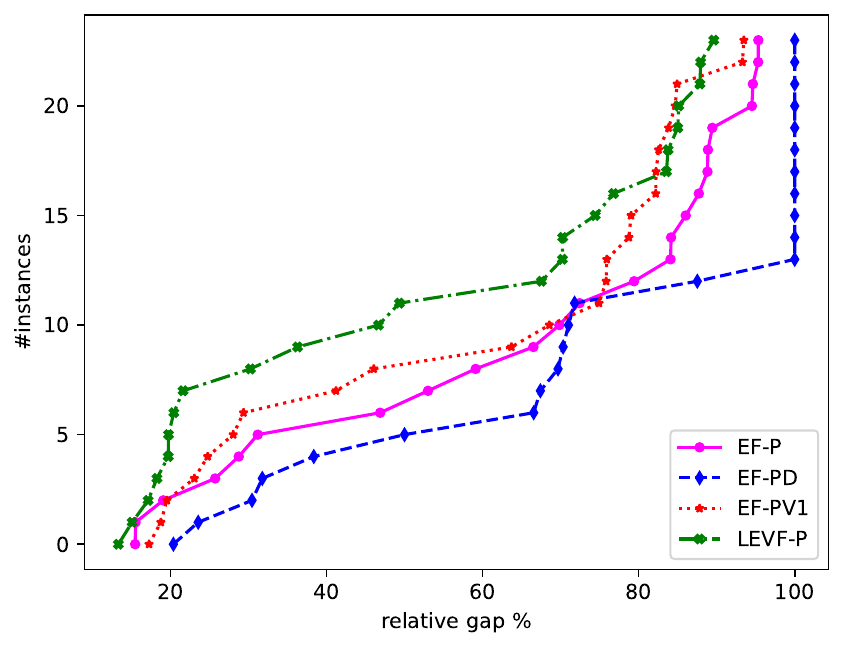}
}
\caption{Performance profile of four formulations for the \elarge benchmark and the Small radius}
\label{fig.pp_ls}
\end{figure}

 \begin{figure}[!h]
\centering{
\includegraphics[width = 0.7\columnwidth]{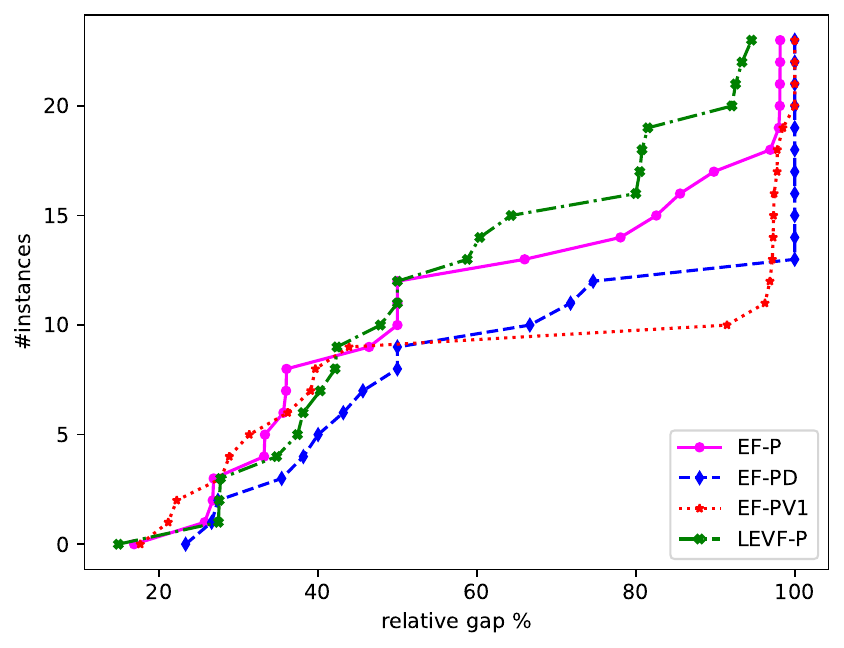}
}
\caption{Performance profile of four formulations for the \elarge benchmark and the Large radius}
\label{fig.pp_ll}
\end{figure}

\section{Conclusion}

In this paper, we introduce several novel MILP formulations and valid inequalities tailored for the edge model of the CSC problem. We investigate three strategies for formulating multivariate piecewise linear concave constraints that represent coverage conditions on edges, resulting in three distinct MILP formulations: one incorporating indicator constraints, another employing big-M techniques, and the third utilizing disjunctive programming techniques. We demonstrate that preprocessing techniques such as delimitation and bound tightening can be applied to enhance the efficiency of these formulations.

We provide a systematic categorization of both new and existing formulations, along with guidelines for selecting the most suitable formulation for specific problem instances. Our empirical findings suggest employing the EF-PD formulation for CSC problems involving small networks and radii, opting for the LEVF-P formulation for scenarios featuring large networks and radii, and utilizing the EF-PV1 formulation for other cases.

Looking ahead, the future evolution of this research could involve integrating tailored heuristics alongside exact methods to improve algorithmic performance. Our exploration of MILP formulations could serve as a valuable reference for developing exact approaches for addressing various network covering problems. It is anticipated that the size of MILP formulations will emerge as a critical factor influencing performance in these formulations.

\section{Data availibility}

All data analyzed during this study are publicly available in \href{https://github.com/lidingxu/cflg/}{https://github.com/lidingxu/cflg/}.

\section{Acknowledgments}
This publication was supported by the Chair ``Integrated Urban Mobility'', backed by L’X - \'Ecole Polytechnique and La Fondation de l’\'Ecole Polytechnique. The Partners of the Chair shall not under any circumstances accept any liability for the content of this publication, for which the author shall be solely liable.


\ifthenelse {\boolean{springer}}
{
\bibliographystyle{plain}
}
{
\bibliographystyle{plain}
}

\section*{Appendix}
\begin{landscape}
\scriptsize
\setlength{\tabcolsep}{1.9pt}
	\begin{longtable}{ll|rr|rrr|rrr|rrr|rrr|rrr|rrr|rrr|rrr|rrr|}
    	\caption{Detailed experimental results.
       	}
    	\label{table:root:detailed}\\
    \tabledefline{R}{radius, S (Small) or L (Large)}
     \tabledefline{SN}{splitted network}
 	\tabledefline{rt}{relative solving time (divided by the time limit 1800 seconds)}
	\tabledefline{$\sigma$}{relative dual gap}
	\tabledefline{$v_r$}{relative primal bound}
	\tabledefline{-}{the result is not available (the solver fails to load the MILP model)}
	\toprule
	\multirow{2}{*}{Instance}  & \multirow{2}{*}{R} &   \multicolumn{2}{c|}{SN} &   \multicolumn{3}{c|}{\texttt{EF}} & \multicolumn{3}{c|}{\texttt{EF-P}} & \multicolumn{3}{c|}{\texttt{EF-PI}} & \multicolumn{3}{c|}{\texttt{EF-PD}} &
	\multicolumn{3}{c|}{\texttt{EF-PV1}} & \multicolumn{3}{c|}{\texttt{EF-PV2}} &  \multicolumn{3}{c|}{\texttt{EVF-P}}  &
 \multicolumn{3}{c|}{\texttt{LEVF-P}}  \\
  & & nodes & edges  &  rt & $\sigma$  & $v_r$  & rt & $\sigma$  & $v_r$ & rt & $\sigma$  & $v_r$  & rt & $\sigma$  & $v_r$  & rt & $\sigma$  & $v_r$  & rt & $\sigma$  & $v_r$  & rt & $\sigma$  & $v_r$  & rt & $\sigma$  & $v_r$  \\
    	\toprule
 \multirow{2}{*}{\texttt{city265}}&S & 298 & 373.0&- & - & -&1.0 & 19.1\% & 34.9\%&1.0 & 18.8\% & 34.9\%&1.0 & 71.0\% & 96.6\%&1.0 & 18.8\% & 35.2\%&1.0 & 19.4\% & 35.2\%&1.0 & 28.6\% & 37.9\%&1.0 & 19.7\% & 35.9\%\\
&L & 178 & 253.0&- & - & -&1.0 & 26.8\% & 23.0\%&1.0 & 21.5\% & 21.9\%&1.0 & 26.6\% & 23.6\%&1.0 & 21.2\% & 21.9\%&1.0 & 23.6\% & 22.5\%&1.0 & 32.8\% & 23.6\%&1.0 & 27.6\% & 22.5\%\\
\multirow{2}{*}{\texttt{city628}}&S & 756 & 919.0&- & - & -&1.0 & 59.1\% & 69.7\%&1.0 & 60.5\% & 70.8\%&1.0 & 69.7\% & 89.4\%&1.0 & 19.5\% & 35.4\%&1.0 & 65.6\% & 83.1\%&1.0 & 39.4\% & 44.7\%&1.0 & 18.3\% & 34.7\%\\
&L & 486 & 649.0&- & - & -&1.0 & 26.9\% & 22.8\%&1.0 & 23.3\% & 21.8\%&1.0 & 71.8\% & 55.8\%&1.0 & 28.9\% & 22.6\%&0.8 & 30.0\% & 23.3\%&1.0 & 27.0\% & 22.2\%&1.0 & 27.5\% & 22.6\%\\
\multirow{2}{*}{\texttt{city479}}&S & 584 & 689.0&- & - & -&1.0 & 15.5\% & 34.2\%&1.0 & 48.1\% & 56.2\%&1.0 & 67.4\% & 89.7\%&1.0 & 74.9\% & 117.3\%&1.0 & 14.4\% & 34.2\%&1.0 & 41.5\% & 47.9\%&1.0 & 13.3\% & 33.9\%\\
&L & 379 & 484.0&- & - & -&1.0 & 16.9\% & 21.4\%&1.0 & 16.1\% & 21.4\%&1.0 & 23.4\% & 23.2\%&1.0 & 17.6\% & 21.6\%&0.9 & 19.8\% & 21.9\%&1.0 & 22.0\% & 22.2\%&1.0 & 14.9\% & 20.8\%\\
\multirow{2}{*}{\texttt{city132}}&S & 152 & 179.0&- & - & -&- & - & -&1.0 & 11.5\% & 36.8\%&- & - & -&1.0 & 11.9\% & 36.8\%&- & - & -&1.0 & 12.5\% & 36.8\%&1.0 & 10.5\% & 36.8\%\\
&L & 91 & 118.0&1.0 & 87.7\% & 129.7\%&1.0 & 15.7\% & 24.2\%&1.0 & 17.3\% & 24.2\%&1.0 & 18.4\% & 24.2\%&1.0 & 14.7\% & 24.2\%&1.0 & 14.3\% & 24.2\%&1.0 & 21.2\% & 24.2\%&1.0 & 18.1\% & 24.2\%\\
\multirow{2}{*}{\texttt{city213}}&S & 249 & 319.0&- & - & -&1.0 & 25.7\% & 32.5\%&1.0 & 25.6\% & 31.3\%&1.0 & 30.5\% & 34.9\%&1.0 & 24.8\% & 32.1\%&1.0 & 24.0\% & 32.1\%&1.0 & 30.0\% & 32.5\%&1.0 & 21.6\% & 30.9\%\\
&L & 136 & 206.0&- & - & -&1.0 & 36.0\% & 21.3\%&1.0 & 39.2\% & 22.1\%&1.0 & 38.2\% & 22.8\%&1.0 & 36.2\% & 22.8\%&1.0 & 36.5\% & 22.8\%&1.0 & 44.4\% & 22.8\%&1.0 & 40.3\% & 21.3\%\\
\multirow{2}{*}{\texttt{city276}}&S & 318 & 403.0&- & - & -&1.0 & 28.8\% & 35.8\%&1.0 & 45.3\% & 46.5\%&1.0 & 66.6\% & 78.3\%&1.0 & 23.1\% & 34.3\%&1.0 & 22.5\% & 33.3\%&1.0 & 50.4\% & 48.7\%&1.0 & 20.4\% & 33.0\%\\
&L & 186 & 271.0&- & - & -&1.0 & 36.0\% & 23.1\%&1.0 & 32.6\% & 22.0\%&1.0 & 40.0\% & 25.3\%&1.0 & 31.4\% & 22.0\%&1.0 & 34.3\% & 22.6\%&1.0 & 37.8\% & 22.6\%&1.0 & 37.4\% & 23.1\%\\
\multirow{2}{*}{\texttt{city268}}&S & 307 & 380.0&- & - & -&1.0 & 15.5\% & 34.2\%&1.0 & 18.9\% & 34.9\%&1.0 & 23.6\% & 36.8\%&1.0 & 17.3\% & 34.2\%&1.0 & 19.1\% & 35.8\%&1.0 & 38.0\% & 43.6\%&1.0 & 17.1\% & 34.2\%\\
&L & 188 & 261.0&- & - & -&1.0 & 33.2\% & 22.3\%&1.0 & 32.6\% & 22.9\%&1.0 & 27.4\% & 21.8\%&1.0 & 27.8\% & 21.8\%&0.9 & 26.7\% & 21.3\%&1.0 & 38.2\% & 22.9\%&1.0 & 34.8\% & 22.9\%\\
\multirow{2}{*}{\texttt{city771}}&S & 944 & 1123.0&- & - & -&1.0 & 66.5\% & 85.9\%&1.0 & 62.5\% & 74.6\%&1.0 & 70.3\% & 90.1\%&1.0 & 75.9\% & 118.3\%&1.0 & 76.1\% & 118.3\%&1.0 & 39.0\% & 44.5\%&1.0 & 15.1\% & 33.8\%\\
&L & 591 & 770.0&- & - & -&1.0 & 25.7\% & 21.0\%&1.0 & 25.4\% & 21.5\%&1.0 & 74.6\% & 59.9\%&1.0 & 22.2\% & 20.5\%&1.0 & 21.5\% & 20.1\%&1.0 & 30.1\% & 22.0\%&1.0 & 27.7\% & 21.3\%\\
\multirow{2}{*}{\texttt{city138}}&S & 162 & 201.0&- & - & -&1.0 & 18.9\% & 32.7\%&1.0 & 18.7\% & 32.1\%&1.0 & 21.9\% & 34.0\%&1.0 & 19.7\% & 33.3\%&1.0 & 18.9\% & 32.7\%&1.0 & 20.8\% & 32.7\%&1.0 & 17.8\% & 32.1\%\\
&L & 88 & 127.0&1.0 & 52.1\% & 35.2\%&1.0 & 21.7\% & 22.7\%&1.0 & 22.2\% & 22.7\%&1.0 & 22.2\% & 22.7\%&1.0 & 19.5\% & 22.7\%&1.0 & 20.6\% & 22.7\%&1.0 & 24.0\% & 22.7\%&1.0 & 26.1\% & 22.7\%\\
\multirow{2}{*}{\texttt{K100.3.con.red}}&S & 94 & 191.0&- & - & -&1.0 & 33.3\% & 25.5\%&1.0 & 40.2\% & 26.6\%&1.0 & 28.1\% & 26.6\%&1.0 & 34.9\% & 26.6\%&- & - & -&1.0 & 50.8\% & 27.7\%&1.0 & 27.1\% & 25.5\%\\
&L & 40 & 137.0&- & - & -&0.1 & 14.3\% & 17.5\%&0.2 & 14.3\% & 17.5\%&0.1 & 14.3\% & 17.5\%&0.2 & 14.3\% & 17.5\%&- & - & -&0.1 & 14.3\% & 17.5\%&0.0 & 14.3\% & 17.5\%\\
\multirow{2}{*}{\texttt{K100.9.red}}&S & 56 & 104.0&1.0 & 40.0\% & 35.7\%&1.0 & 15.7\% & 28.6\%&1.0 & 16.6\% & 28.6\%&0.9 & 6.7\% & 26.8\%&0.8 & 6.7\% & 26.8\%&0.5 & 21.9\% & 28.6\%&1.0 & 21.1\% & 28.6\%&0.6 & 6.7\% & 26.8\%\\
&L & 30 & 78.0&- & - & -&0.0 & 33.3\% & 10.0\%&0.0 & 33.3\% & 10.0\%&0.0 & 33.3\% & 10.0\%&0.0 & 0.0\% & 10.0\%&- & - & -&0.0 & 33.3\% & 10.0\%&0.0 & 33.3\% & 10.0\%\\
\multirow{2}{*}{\texttt{K100.6.red}}&S & 50 & 92.0&1.0 & 41.2\% & 34.0\%&1.0 & 19.6\% & 30.0\%&1.0 & 19.3\% & 30.0\%&1.0 & 19.4\% & 30.0\%&1.0 & 15.0\% & 30.0\%&0.8 & 25.6\% & 30.0\%&1.0 & 20.0\% & 30.0\%&1.0 & 19.4\% & 30.0\%\\
&L & 28 & 70.0&- & - & -&0.0 & 25.0\% & 14.3\%&0.0 & 25.0\% & 14.3\%&0.0 & 25.0\% & 14.3\%&0.0 & 25.0\% & 14.3\%&- & - & -&0.0 & 25.0\% & 14.3\%&0.1 & 25.0\% & 14.3\%\\
\multirow{2}{*}{\texttt{K100.2.red}}&S & 61 & 120.0&- & - & -&1.0 & 20.9\% & 29.5\%&1.0 & 23.3\% & 29.5\%&1.0 & 19.1\% & 29.5\%&1.0 & 25.8\% & 29.5\%&- & - & -&1.0 & 28.2\% & 29.5\%&1.0 & 20.4\% & 29.5\%\\
&L & 32 & 91.0&- & - & -&0.0 & 25.0\% & 12.5\%&0.1 & 25.0\% & 12.5\%&0.2 & 0.0\% & 12.5\%&0.0 & 0.0\% & 12.5\%&- & - & -&0.1 & 0.0\% & 12.5\%&0.0 & 25.0\% & 12.5\%\\
\multirow{2}{*}{\texttt{K100.10.red}}&S & 67 & 118.0&1.0 & 72.0\% & 74.6\%&1.0 & 14.9\% & 28.4\%&1.0 & 34.9\% & 29.9\%&1.0 & 22.4\% & 29.9\%&1.0 & 18.9\% & 28.4\%&0.4 & 27.5\% & 29.9\%&1.0 & 24.6\% & 29.9\%&1.0 & 20.2\% & 28.4\%\\
&L & 35 & 86.0&- & - & -&0.0 & 20.0\% & 14.3\%&0.1 & 20.0\% & 14.3\%&0.1 & 20.0\% & 14.3\%&0.1 & 20.0\% & 14.3\%&- & - & -&0.2 & 20.0\% & 14.3\%&0.2 & 20.0\% & 14.3\%\\
\multirow{2}{*}{\texttt{K100.1.red}}&S & 120 & 263.0&- & - & -&1.0 & 31.2\% & 32.5\%&1.0 & 36.2\% & 31.7\%&1.0 & 20.4\% & 33.3\%&1.0 & 28.0\% & 33.3\%&- & - & -&1.0 & 41.7\% & 33.3\%&1.0 & 19.8\% & 30.0\%\\
&L & 65 & 208.0&- & - & -&1.0 & 35.7\% & 15.4\%&1.0 & 29.9\% & 15.4\%&1.0 & 43.2\% & 16.9\%&1.0 & 43.9\% & 16.9\%&- & - & -&1.0 & 39.4\% & 15.4\%&1.0 & 42.4\% & 16.9\%\\
\multirow{2}{*}{\texttt{K100.5.con.red}}&S & 86 & 175.0&- & - & -&1.0 & 41.0\% & 29.1\%&1.0 & 40.2\% & 27.9\%&1.0 & 37.2\% & 30.2\%&1.0 & 35.9\% & 27.9\%&- & - & -&1.0 & 45.6\% & 29.1\%&1.0 & 33.6\% & 27.9\%\\
&L & 39 & 128.0&- & - & -&0.8 & 14.3\% & 17.9\%&0.3 & 14.3\% & 17.9\%&1.0 & 26.0\% & 20.5\%&1.0 & 31.3\% & 20.5\%&- & - & -&1.0 & 22.0\% & 17.9\%&1.0 & 14.3\% & 17.9\%\\
\multirow{2}{*}{\texttt{K100.7.red}}&S & 74 & 142.0&- & - & -&1.0 & 38.7\% & 31.1\%&1.0 & 40.7\% & 31.1\%&1.0 & 21.4\% & 31.1\%&1.0 & 22.3\% & 31.1\%&- & - & -&1.0 & 41.2\% & 31.1\%&1.0 & 23.7\% & 31.1\%\\
&L & 37 & 105.0&- & - & -&0.0 & 25.0\% & 10.8\%&0.0 & 25.0\% & 10.8\%&0.0 & 0.0\% & 10.8\%&0.0 & 0.0\% & 10.8\%&- & - & -&0.2 & 25.0\% & 10.8\%&0.1 & 25.0\% & 10.8\%\\
\multirow{2}{*}{\texttt{K100.4.con.red}}&S & 85 & 169.0&- & - & -&1.0 & 36.4\% & 27.1\%&1.0 & 35.7\% & 27.1\%&1.0 & 30.7\% & 29.4\%&1.0 & 28.0\% & 27.1\%&- & - & -&1.0 & 40.3\% & 28.2\%&1.0 & 25.3\% & 27.1\%\\
&L & 40 & 124.0&- & - & -&0.0 & 25.0\% & 10.0\%&0.2 & 20.0\% & 12.5\%&0.3 & 0.0\% & 10.0\%&0.0 & 0.0\% & 10.0\%&- & - & -&0.2 & 0.0\% & 10.0\%&0.4 & 25.0\% & 10.0\%\\
\multirow{2}{*}{\texttt{K100.8.con.red}}&S & 107 & 208.0&- & - & -&1.0 & 30.9\% & 29.9\%&1.0 & 31.3\% & 29.9\%&1.0 & 33.3\% & 31.8\%&1.0 & 34.2\% & 29.9\%&- & - & -&1.0 & 35.5\% & 29.9\%&1.0 & 27.8\% & 29.0\%\\
&L & 57 & 158.0&- & - & -&1.0 & 38.7\% & 19.3\%&1.0 & 23.6\% & 15.8\%&1.0 & 35.3\% & 17.5\%&1.0 & 22.8\% & 15.8\%&- & - & -&1.0 & 40.0\% & 17.5\%&1.0 & 39.5\% & 17.5\%\\
\multirow{2}{*}{\texttt{K100.con.red}}&S & 145 & 291.0&- & - & -&1.0 & 53.0\% & 27.6\%&1.0 & 53.5\% & 29.0\%&1.0 & 38.4\% & 24.8\%&1.0 & 41.2\% & 24.8\%&- & - & -&1.0 & 50.3\% & 24.1\%&1.0 & 46.7\% & 24.1\%\\
&L & 55 & 201.0&- & - & -&1.0 & 33.3\% & 16.4\%&1.0 & 36.1\% & 16.4\%&1.0 & 35.4\% & 16.4\%&1.0 & 39.1\% & 16.4\%&- & - & -&1.0 & 39.3\% & 16.4\%&1.0 & 38.1\% & 16.4\%\\
\multirow{2}{*}{\texttt{K400.7.con.red}}&S & 633 & 1275.0&- & - & -&1.0 & 88.9\% & 108.7\%&1.0 & 87.5\% & 109.0\%&- & - & -&1.0 & 82.2\% & 101.4\%&- & - & -&1.0 & 82.3\% & 65.9\%&1.0 & 83.8\% & 104.9\%\\
&L & 302 & 944.0&- & - & -&1.0 & 78.1\% & 31.1\%&1.0 & 97.8\% & 312.6\%&- & - & -&0.9 & 97.3\% & 312.6\%&- & - & -&1.0 & 98.4\% & 412.6\%&1.0 & 64.3\% & 20.9\%\\
    \endfirsthead  \\
\multirow{2}{*}{\texttt{K400.5.con.red}}&S & 600 & 1179.0&- & - & -&1.0 & 87.7\% & 106.7\%&1.0 & 88.0\% & 107.7\%&- & - & -&1.0 & 82.6\% & 108.3\%&- & - & -&1.0 & 81.3\% & 63.8\%&1.0 & 83.6\% & 105.0\%\\
&L & 295 & 874.0&- & - & -&1.0 & 82.6\% & 35.3\%&1.0 & 97.9\% & 296.3\%&- & - & -&1.0 & 91.5\% & 102.4\%&- & - & -&1.0 & 98.5\% & 396.3\%&1.0 & 80.5\% & 35.3\%\\
\multirow{2}{*}{\texttt{K400.9.con.red}}&S & 588 & 1239.0&- & - & -&1.0 & 88.8\% & 119.0\%&1.0 & 86.6\% & 103.2\%&- & - & -&1.0 & 68.6\% & 64.8\%&- & - & -&1.0 & 80.5\% & 64.1\%&1.0 & 70.3\% & 71.8\%\\
&L & 312 & 963.0&- & - & -&1.0 & 98.2\% & 308.7\%&1.0 & 98.2\% & 308.7\%&- & - & -&0.9 & 97.2\% & 308.7\%&- & - & -&1.0 & 98.7\% & 408.7\%&1.0 & 58.8\% & 20.5\%\\
\multirow{2}{*}{\texttt{K400.2.red}}&S & 709 & 1429.0&- & - & -&1.0 & 94.6\% & 201.6\%&1.0 & 94.6\% & 201.6\%&- & - & -&1.0 & 75.8\% & 70.7\%&- & - & -&1.0 & 83.2\% & 61.1\%&1.0 & 85.0\% & 109.0\%\\
&L & 311 & 1031.0&- & - & -&1.0 & 98.2\% & 331.5\%&1.0 & 98.2\% & 331.5\%&- & - & -&0.9 & 97.8\% & 331.5\%&- & - & -&1.0 & 98.7\% & 431.5\%&1.0 & 94.6\% & 112.5\%\\
\multirow{2}{*}{\texttt{K400.red}}&S & 715 & 1398.0&- & - & -&1.0 & 84.2\% & 67.4\%&1.0 & 89.0\% & 96.4\%&- & - & -&1.0 & 84.7\% & 93.7\%&- & - & -&1.0 & 85.2\% & 67.3\%&1.0 & 87.9\% & 114.7\%\\
&L & 296 & 979.0&- & - & -&1.0 & 98.0\% & 330.7\%&1.0 & 88.2\% & 55.7\%&- & - & -&0.9 & 97.3\% & 330.7\%&- & - & -&1.0 & 98.5\% & 430.7\%&1.0 & 80.8\% & 39.9\%\\
\multirow{2}{*}{\texttt{K400.1.con.red}}&S & 587 & 1224.0&- & - & -&1.0 & 84.1\% & 119.3\%&1.0 & 84.0\% & 112.4\%&1.0 & 87.5\% & 122.7\%&1.0 & 79.0\% & 96.8\%&- & - & -&1.0 & 75.9\% & 56.0\%&1.0 & 36.3\% & 34.9\%\\
&L & 314 & 951.0&- & - & -&1.0 & 96.9\% & 302.9\%&1.0 & 92.1\% & 114.6\%&- & - & -&0.9 & 96.3\% & 302.9\%&- & - & -&1.0 & 97.8\% & 402.9\%&1.0 & 42.2\% & 22.9\%\\
\multirow{2}{*}{\texttt{K400.8.con.red}}&S & 749 & 1501.0&- & - & -&1.0 & 95.3\% & 200.4\%&1.0 & 95.3\% & 200.4\%&- & - & -&1.0 & 93.5\% & 200.4\%&- & - & -&1.0 & 97.1\% & 300.4\%&1.0 & 89.6\% & 116.4\%\\
&L & 294 & 1046.0&- & - & -&1.0 & 85.6\% & 46.9\%&1.0 & 98.1\% & 355.8\%&- & - & -&- & - & -&- & - & -&1.0 & 98.6\% & 455.8\%&1.0 & 92.6\% & 95.2\%\\
\multirow{2}{*}{\texttt{K400.6.con.red}}&S & 789 & 1583.0&- & - & -&1.0 & 95.3\% & 200.6\%&1.0 & 95.2\% & 200.6\%&- & - & -&1.0 & 93.3\% & 200.6\%&- & - & -&1.0 & 97.2\% & 300.6\%&1.0 & 87.9\% & 115.6\%\\
&L & 321 & 1115.0&- & - & -&1.0 & 98.1\% & 347.4\%&1.0 & 98.1\% & 347.4\%&- & - & -&0.9 & 98.5\% & 347.4\%&- & - & -&1.0 & 98.6\% & 447.4\%&1.0 & 92.1\% & 98.8\%\\
\multirow{2}{*}{\texttt{K400.4.red}}&S & 516 & 1103.0&- & - & -&1.0 & 86.0\% & 105.0\%&1.0 & 85.7\% & 109.3\%&- & - & -&1.0 & 78.8\% & 94.0\%&- & - & -&1.0 & 77.8\% & 55.4\%&1.0 & 49.3\% & 38.8\%\\
&L & 270 & 857.0&- & - & -&1.0 & 98.1\% & 317.4\%&1.0 & 98.1\% & 317.4\%&- & - & -&0.9 & 97.4\% & 317.4\%&- & - & -&1.0 & 98.6\% & 417.4\%&1.0 & 93.4\% & 107.8\%\\
\multirow{2}{*}{\texttt{K400.10.con.red}}&S & 671 & 1373.0&- & - & -&1.0 & 94.5\% & 204.6\%&1.0 & 94.5\% & 204.6\%&- & - & -&1.0 & 84.9\% & 113.4\%&- & - & -&1.0 & 96.5\% & 304.6\%&1.0 & 74.4\% & 64.2\%\\
&L & 300 & 1002.0&- & - & -&1.0 & 89.8\% & 52.7\%&1.0 & 98.4\% & 334.0\%&- & - & -&0.9 & 97.8\% & 334.0\%&- & - & -&1.0 & 98.8\% & 434.0\%&1.0 & 81.5\% & 32.0\%\\
\multirow{2}{*}{\texttt{K400.3.con.red}}&S & 595 & 1191.0&- & - & -&1.0 & 89.4\% & 110.6\%&1.0 & 89.7\% & 112.6\%&- & - & -&1.0 & 82.2\% & 88.9\%&- & - & -&1.0 & 83.4\% & 63.9\%&1.0 & 85.1\% & 104.5\%\\
&L & 273 & 869.0&- & - & -&1.0 & 66.0\% & 21.6\%&1.0 & 97.7\% & 318.3\%&- & - & -&0.9 & 96.9\% & 318.3\%&- & - & -&1.0 & 98.4\% & 418.3\%&1.0 & 60.4\% & 23.1\%\\
\multirow{2}{*}{\texttt{K200.con.red}}&S & 184 & 374.0&- & - & -&1.0 & 46.9\% & 37.5\%&1.0 & 42.2\% & 34.8\%&1.0 & 31.8\% & 36.4\%&1.0 & 29.4\% & 34.8\%&- & - & -&1.0 & 53.8\% & 46.2\%&1.0 & 30.3\% & 32.6\%\\
&L & 109 & 299.0&- & - & -&1.0 & 46.4\% & 13.8\%&1.0 & 42.9\% & 12.8\%&1.0 & 45.7\% & 14.7\%&1.0 & 39.7\% & 12.8\%&- & - & -&1.0 & 48.9\% & 13.8\%&1.0 & 47.9\% & 13.8\%\\
\multirow{2}{*}{\texttt{r100.312}}&S & 15 & 17.0&0.0 & 16.7\% & 40.0\%&0.0 & 16.7\% & 40.0\%&0.0 & 16.7\% & 40.0\%&0.0 & 16.7\% & 40.0\%&0.0 & 16.7\% & 40.0\%&0.0 & 16.7\% & 40.0\%&0.0 & 16.7\% & 40.0\%&0.0 & 16.7\% & 40.0\%\\
&L & 10 & 12.0&0.0 & 33.3\% & 30.0\%&0.0 & 33.3\% & 30.0\%&0.0 & 33.3\% & 30.0\%&0.0 & 33.3\% & 30.0\%&0.0 & 0.0\% & 20.0\%&0.0 & 0.0\% & 20.0\%&0.0 & 33.3\% & 30.0\%&0.0 & 33.3\% & 30.0\%\\
\multirow{2}{*}{\texttt{r150.325}}&S & 26 & 36.0&0.2 & 12.5\% & 30.8\%&0.0 & 12.5\% & 30.8\%&0.0 & 12.5\% & 30.8\%&0.0 & 12.5\% & 30.8\%&0.0 & 12.5\% & 30.8\%&0.0 & 12.5\% & 30.8\%&0.1 & 12.5\% & 30.8\%&0.0 & 12.5\% & 30.8\%\\
&L & 15 & 25.0&0.0 & 33.3\% & 20.0\%&0.0 & 33.3\% & 20.0\%&0.0 & 33.3\% & 20.0\%&0.0 & 33.3\% & 20.0\%&0.0 & 33.3\% & 20.0\%&0.0 & 33.3\% & 20.0\%&0.0 & 33.3\% & 20.0\%&0.0 & 33.3\% & 20.0\%\\
\multirow{2}{*}{\texttt{r200.234}}&S & 39 & 53.0&1.0 & 24.5\% & 28.2\%&0.1 & 9.1\% & 28.2\%&0.1 & 9.1\% & 28.2\%&0.2 & 9.1\% & 28.2\%&0.0 & 9.1\% & 28.2\%&0.1 & 9.1\% & 28.2\%&0.6 & 9.1\% & 28.2\%&0.3 & 9.1\% & 28.2\%\\
&L & 20 & 34.0&0.0 & 33.3\% & 15.0\%&0.0 & 33.3\% & 15.0\%&0.0 & 33.3\% & 15.0\%&0.0 & 33.3\% & 15.0\%&0.0 & 33.3\% & 15.0\%&0.0 & 33.3\% & 15.0\%&0.0 & 33.3\% & 15.0\%&0.0 & 33.3\% & 15.0\%\\
\multirow{2}{*}{\texttt{r150.445}}&S & 40 & 70.0&1.0 & 51.5\% & 27.5\%&0.9 & 12.5\% & 20.0\%&1.0 & 22.3\% & 20.0\%&0.5 & 12.5\% & 20.0\%&1.0 & 15.5\% & 20.0\%&0.9 & 21.3\% & 20.0\%&1.0 & 34.4\% & 22.5\%&1.0 & 26.1\% & 20.0\%\\
&L & 15 & 45.0&0.1 & 33.3\% & 20.0\%&0.0 & 50.0\% & 13.3\%&0.0 & 50.0\% & 13.3\%&0.0 & 50.0\% & 13.3\%&0.0 & 0.0\% & 13.3\%&0.0 & 0.0\% & 13.3\%&0.0 & 50.0\% & 13.3\%&0.0 & 50.0\% & 13.3\%\\
\multirow{2}{*}{\texttt{r100.413}}&S & 14 & 17.0&0.0 & 16.7\% & 42.9\%&0.0 & 16.7\% & 42.9\%&0.0 & 16.7\% & 42.9\%&0.0 & 16.7\% & 42.9\%&0.0 & 16.7\% & 42.9\%&0.0 & 16.7\% & 42.9\%&0.0 & 16.7\% & 42.9\%&0.0 & 16.7\% & 42.9\%\\
&L & 10 & 13.0&0.0 & 0.0\% & 20.0\%&0.0 & 0.0\% & 20.0\%&0.0 & 33.3\% & 30.0\%&0.0 & 0.0\% & 20.0\%&0.0 & 0.0\% & 20.0\%&0.0 & 0.0\% & 20.0\%&0.0 & 0.0\% & 20.0\%&0.0 & 33.3\% & 30.0\%\\
\multirow{2}{*}{\texttt{r150.114}}&S & 19 & 18.0&0.0 & 12.5\% & 42.1\%&0.0 & 12.5\% & 42.1\%&0.0 & 12.5\% & 42.1\%&0.0 & 12.5\% & 42.1\%&0.0 & 12.5\% & 42.1\%&0.0 & 12.5\% & 42.1\%&0.0 & 12.5\% & 42.1\%&0.0 & 11.1\% & 47.4\%\\
&L & 15 & 14.0&0.0 & 25.0\% & 26.7\%&0.0 & 25.0\% & 26.7\%&0.0 & 0.0\% & 26.7\%&0.0 & 25.0\% & 26.7\%&0.0 & 25.0\% & 26.7\%&0.0 & 25.0\% & 26.7\%&0.0 & 25.0\% & 26.7\%&0.0 & 25.0\% & 26.7\%\\
\multirow{2}{*}{\texttt{r200.349}}&S & 46 & 75.0&1.0 & 33.3\% & 26.1\%&1.0 & 18.2\% & 23.9\%&1.0 & 18.2\% & 23.9\%&1.0 & 15.7\% & 23.9\%&1.0 & 17.8\% & 23.9\%&0.9 & 19.9\% & 23.9\%&1.0 & 24.6\% & 23.9\%&1.0 & 24.2\% & 23.9\%\\
&L & 20 & 49.0&0.8 & 33.3\% & 15.0\%&0.0 & 50.0\% & 10.0\%&0.0 & 50.0\% & 10.0\%&0.0 & 0.0\% & 10.0\%&0.0 & 0.0\% & 10.0\%&0.0 & 0.0\% & 10.0\%&0.0 & 50.0\% & 10.0\%&0.0 & 50.0\% & 10.0\%\\
\multirow{2}{*}{\texttt{r100.29}}&S & 12 & 11.0&0.0 & 20.0\% & 41.7\%&0.0 & 20.0\% & 41.7\%&0.0 & 0.0\% & 41.7\%&0.0 & 0.0\% & 41.7\%&0.0 & 20.0\% & 41.7\%&0.0 & 20.0\% & 41.7\%&0.0 & 20.0\% & 41.7\%&0.0 & 16.7\% & 50.0\%\\
&L & 10 & 9.0&0.0 & 0.0\% & 20.0\%&0.0 & 33.3\% & 30.0\%&0.0 & 33.3\% & 30.0\%&0.0 & 33.3\% & 30.0\%&0.0 & 0.0\% & 20.0\%&0.0 & 0.0\% & 20.0\%&0.0 & 0.0\% & 20.0\%&0.0 & 33.3\% & 30.0\%\\
\multirow{2}{*}{\texttt{r100.112}}&S & 16 & 18.0&0.0 & 0.0\% & 31.3\%&0.0 & 20.0\% & 31.2\%&0.0 & 20.0\% & 31.2\%&0.0 & 20.0\% & 31.2\%&0.0 & 20.0\% & 31.2\%&0.0 & 20.0\% & 31.2\%&0.0 & 20.0\% & 31.2\%&0.0 & 20.0\% & 31.2\%\\
&L & 10 & 12.0&0.0 & 33.3\% & 30.0\%&0.0 & 0.0\% & 20.0\%&0.0 & 0.0\% & 20.0\%&0.0 & 0.0\% & 20.0\%&0.0 & 33.3\% & 30.0\%&0.0 & 33.3\% & 30.0\%&0.0 & 33.3\% & 30.0\%&0.0 & 33.3\% & 30.0\%\\
\multirow{2}{*}{\texttt{r200.123}}&S & 29 & 32.0&0.1 & 10.0\% & 34.5\%&0.0 & 0.0\% & 34.5\%&0.0 & 10.0\% & 34.5\%&0.0 & 0.0\% & 34.5\%&0.0 & 10.0\% & 34.5\%&0.0 & 10.0\% & 34.5\%&0.0 & 9.1\% & 37.9\%&0.0 & 10.0\% & 34.5\%\\
&L & 20 & 23.0&0.0 & 25.0\% & 20.0\%&0.0 & 20.0\% & 25.0\%&0.0 & 20.0\% & 25.0\%&0.0 & 0.0\% & 20.0\%&0.0 & 20.0\% & 25.0\%&0.0 & 20.0\% & 25.0\%&0.0 & 20.0\% & 25.0\%&0.0 & 20.0\% & 25.0\%\\
\multirow{2}{*}{\texttt{r150.222}}&S & 25 & 32.0&0.0 & 12.5\% & 32.0\%&0.0 & 12.5\% & 32.0\%&0.0 & 12.5\% & 32.0\%&0.0 & 12.5\% & 32.0\%&0.0 & 12.5\% & 32.0\%&0.0 & 12.5\% & 32.0\%&0.0 & 12.5\% & 32.0\%&0.0 & 12.5\% & 32.0\%\\
&L & 15 & 22.0&0.0 & 0.0\% & 20.0\%&0.0 & 33.3\% & 20.0\%&0.0 & 33.3\% & 20.0\%&0.0 & 0.0\% & 20.0\%&0.0 & 33.3\% & 20.0\%&0.0 & 33.3\% & 20.0\%&0.0 & 33.3\% & 20.0\%&0.0 & 33.3\% & 20.0\%\\
\multirow{2}{*}{\texttt{r200.469}}&S & 52 & 101.0&1.0 & 52.9\% & 32.7\%&1.0 & 32.6\% & 23.1\%&1.0 & 30.9\% & 23.1\%&1.0 & 27.6\% & 23.1\%&1.0 & 27.1\% & 23.1\%&0.6 & 30.2\% & 23.1\%&1.0 & 38.8\% & 23.1\%&1.0 & 37.7\% & 23.1\%\\
&L & 20 & 69.0&1.0 & 77.9\% & 45.0\%&0.0 & 50.0\% & 10.0\%&0.0 & 50.0\% & 10.0\%&0.0 & 50.0\% & 10.0\%&0.0 & 0.0\% & 10.0\%&- & - & -&0.0 & 50.0\% & 10.0\%&0.0 & 50.0\% & 10.0\%\\
\multirow{2}{*}{\texttt{r250.4112}}&S & 83 & 170.0&- & - & -&1.0 & 64.8\% & 21.7\%&1.0 & 54.2\% & 18.1\%&1.0 & 43.3\% & 19.3\%&1.0 & 43.2\% & 18.1\%&- & - & -&1.0 & 65.3\% & 20.5\%&1.0 & 66.7\% & 21.7\%\\
&L & 25 & 112.0&- & - & -&0.0 & 50.0\% & 8.0\%&0.0 & 50.0\% & 8.0\%&0.2 & 50.0\% & 8.0\%&0.2 & 0.0\% & 8.0\%&- & - & -&0.0 & 50.0\% & 8.0\%&0.0 & 0.0\% & 8.0\%\\
\multirow{2}{*}{\texttt{r300.3131}}&S & 92 & 193.0&- & - & -&1.0 & 65.5\% & 25.0\%&1.0 & 58.8\% & 21.7\%&1.0 & 43.0\% & 21.7\%&1.0 & 44.8\% & 21.7\%&- & - & -&1.0 & 66.9\% & 23.9\%&1.0 & 64.0\% & 21.7\%\\
&L & 30 & 131.0&- & - & -&0.5 & 33.3\% & 10.0\%&0.9 & 33.3\% & 10.0\%&1.0 & 52.1\% & 10.0\%&1.0 & 62.7\% & 10.0\%&- & - & -&0.5 & 33.3\% & 10.0\%&1.0 & 50.0\% & 10.0\%\\
\multirow{2}{*}{\texttt{r300.275}}&S & 67 & 112.0&1.0 & 47.8\% & 34.3\%&1.0 & 27.2\% & 25.4\%&1.0 & 29.0\% & 25.4\%&1.0 & 30.6\% & 26.9\%&1.0 & 27.6\% & 25.4\%&1.0 & 32.9\% & 26.9\%&1.0 & 30.2\% & 25.4\%&1.0 & 36.1\% & 26.9\%\\
&L & 30 & 75.0&1.0 & 77.0\% & 33.3\%&0.4 & 25.0\% & 13.3\%&0.2 & 25.0\% & 13.3\%&0.9 & 25.0\% & 13.3\%&0.1 & 25.0\% & 13.3\%&0.2 & 44.9\% & 13.3\%&0.7 & 25.0\% & 13.3\%&0.5 & 25.0\% & 13.3\%\\
\multirow{2}{*}{\texttt{r250.136}}&S & 40 & 51.0&1.0 & 27.5\% & 35.0\%&0.0 & 7.7\% & 32.5\%&0.1 & 7.7\% & 32.5\%&0.1 & 7.7\% & 32.5\%&0.0 & 7.7\% & 32.5\%&0.0 & 7.7\% & 32.5\%&0.2 & 7.7\% & 32.5\%&0.0 & 7.7\% & 32.5\%\\
&L & 25 & 36.0&0.0 & 20.0\% & 20.0\%&0.0 & 0.0\% & 16.0\%&0.0 & 0.0\% & 16.0\%&0.0 & 0.0\% & 16.0\%&0.0 & 0.0\% & 16.0\%&0.0 & 0.0\% & 16.0\%&0.0 & 0.0\% & 16.0\%&0.0 & 0.0\% & 16.0\%\\
\multirow{2}{*}{\texttt{r400.184}}&S & 78 & 122.0&- & - & -&1.0 & 20.9\% & 26.9\%&1.0 & 21.6\% & 26.9\%&1.0 & 23.8\% & 28.2\%&1.0 & 18.2\% & 26.9\%&1.0 & 24.0\% & 26.9\%&1.0 & 30.5\% & 29.5\%&1.0 & 24.3\% & 26.9\%\\
&L & 40 & 84.0&1.0 & 50.0\% & 20.0\%&0.0 & 16.7\% & 15.0\%&0.1 & 16.7\% & 15.0\%&0.3 & 16.7\% & 15.0\%&0.0 & 16.7\% & 15.0\%&- & - & -&0.0 & 16.7\% & 15.0\%&0.1 & 16.7\% & 15.0\%\\
\multirow{2}{*}{\texttt{r400.3219}}&S & 149 & 328.0&- & - & -&1.0 & 69.8\% & 20.8\%&1.0 & 72.0\% & 22.8\%&1.0 & 71.8\% & 28.2\%&1.0 & 63.7\% & 24.2\%&- & - & -&1.0 & 74.6\% & 21.5\%&1.0 & 70.2\% & 20.1\%\\
&L & 40 & 219.0&- & - & -&0.6 & 50.0\% & 5.0\%&1.0 & 66.7\% & 7.5\%&0.2 & 50.0\% & 5.0\%&- & - & -&- & - & -&1.0 & 66.7\% & 7.5\%&1.0 & 80.0\% & 12.5\%\\
\multirow{2}{*}{\texttt{r300.4188}}&S & 117 & 275.0&- & - & -&1.0 & 72.4\% & 19.7\%&1.0 & 69.4\% & 17.9\%&1.0 & 50.0\% & 20.5\%&1.0 & 46.1\% & 17.9\%&- & - & -&1.0 & 76.6\% & 23.1\%&1.0 & 67.5\% & 18.8\%\\
&L & 30 & 188.0&- & - & -&0.2 & 50.0\% & 6.7\%&0.1 & 50.0\% & 6.7\%&0.5 & 50.0\% & 6.7\%&- & - & -&- & - & -&0.1 & 50.0\% & 6.7\%&0.2 & 50.0\% & 6.7\%\\
\multirow{2}{*}{\texttt{r300.154}}&S & 52 & 76.0&1.0 & 29.4\% & 32.7\%&1.0 & 11.9\% & 30.8\%&1.0 & 12.9\% & 30.8\%&1.0 & 13.7\% & 30.8\%&1.0 & 12.5\% & 30.8\%&1.0 & 15.0\% & 30.8\%&1.0 & 16.7\% & 30.8\%&1.0 & 14.3\% & 30.8\%\\
&L & 30 & 54.0&1.0 & 36.7\% & 16.7\%&0.0 & 20.0\% & 16.7\%&0.0 & 20.0\% & 16.7\%&0.1 & 20.0\% & 16.7\%&0.0 & 20.0\% & 16.7\%&0.0 & 20.0\% & 16.7\%&0.0 & 20.0\% & 16.7\%&0.1 & 20.0\% & 16.7\%\\
\multirow{2}{*}{\texttt{r400.2148}}&S & 112 & 220.0&- & - & -&1.0 & 59.7\% & 25.9\%&1.0 & 55.4\% & 24.1\%&1.0 & 43.9\% & 24.1\%&1.0 & 43.6\% & 23.2\%&- & - & -&1.0 & 59.9\% & 24.1\%&1.0 & 61.9\% & 25.9\%\\
&L & 40 & 148.0&- & - & -&1.0 & 50.0\% & 10.0\%&1.0 & 50.0\% & 10.0\%&1.0 & 69.0\% & 12.5\%&- & - & -&- & - & -&1.0 & 60.0\% & 12.5\%&1.0 & 50.0\% & 10.0\%\\
\multirow{2}{*}{\texttt{r250.258}}&S & 53 & 86.0&1.0 & 47.1\% & 32.1\%&1.0 & 18.0\% & 24.5\%&1.0 & 20.3\% & 24.5\%&1.0 & 17.1\% & 24.5\%&1.0 & 18.9\% & 24.5\%&1.0 & 19.1\% & 24.5\%&1.0 & 23.8\% & 24.5\%&1.0 & 30.1\% & 26.4\%\\
&L & 25 & 58.0&1.0 & 45.5\% & 16.0\%&0.1 & 25.0\% & 16.0\%&0.1 & 25.0\% & 16.0\%&0.2 & 25.0\% & 16.0\%&0.0 & 25.0\% & 16.0\%&0.1 & 25.0\% & 16.0\%&0.2 & 25.0\% & 16.0\%&0.3 & 25.0\% & 16.0\%\\
\multirow{2}{*}{\texttt{r400.4297}}&S & 191 & 448.0&- & - & -&1.0 & 79.4\% & 18.3\%&1.0 & 79.4\% & 17.8\%&- & - & -&0.9 & 83.8\% & 44.0\%&- & - & -&1.0 & 80.8\% & 19.9\%&1.0 & 76.8\% & 16.8\%\\
&L & 40 & 297.0&- & - & -&0.8 & 50.0\% & 5.0\%&1.0 & 66.7\% & 7.5\%&1.0 & 66.7\% & 7.5\%&- & - & -&- & - & -&1.0 & 66.7\% & 7.5\%&0.7 & 50.0\% & 5.0\%\\
\multirow{2}{*}{\texttt{r250.398}}&S & 74 & 147.0&1.0 & 93.5\% & 198.6\%&1.0 & 54.7\% & 23.0\%&1.0 & 50.0\% & 21.6\%&1.0 & 35.9\% & 21.6\%&1.0 & 37.7\% & 21.6\%&0.4 & 40.0\% & 21.6\%&1.0 & 56.1\% & 21.6\%&1.0 & 57.5\% & 21.6\%\\
&L & 25 & 98.0&- & - & -&0.2 & 0.0\% & 8.0\%&0.0 & 50.0\% & 8.0\%&0.7 & 0.0\% & 8.0\%&0.9 & 0.0\% & 8.0\%&- & - & -&0.1 & 50.0\% & 8.0\%&0.0 & 50.0\% & 8.0\%\\    
\bottomrule
\end{longtable}
\end{landscape}

\bibliography{reference}

\end{document}